\pdfoutput=1
\documentclass{amsart}
\usepackage[mathscr]{eucal}
\usepackage{amssymb}
\usepackage[usenames,dvipsnames]{xcolor} 
\usepackage[normalem]{ulem}
\usepackage{wasysym}
\usepackage{amsthm}
\usepackage{bbold}
\usepackage{comment}
\usepackage{enumitem}
\usepackage{amsmath}
\usepackage{amssymb}
\usepackage{tikz-cd}
\usetikzlibrary{arrows}
\usepackage{stmaryrd} 
\usepackage{etoolbox} 
\usepackage{microtype}
\usepackage{adjustbox}
\usepackage[unicode]{hyperref} 
\usepackage{pdflscape}

\definecolor{dark-red}{rgb}{0.5,0.15,0.15}
\definecolor{dark-blue}{rgb}{0.15,0.15,0.6}
\definecolor{dark-green}{rgb}{0.15,0.6,0.15}

\hypersetup{
    colorlinks, linkcolor=OliveGreen,
    citecolor=OliveGreen, urlcolor=OliveGreen
}

\usepackage[nameinlink,capitalise,noabbrev]{cleveref}

\numberwithin{equation}{section}
\usepackage[all]{xy}
\xyoption{line}
\usepackage{graphicx}
\usepackage{mathtools}
\usepackage{caption}

\newcommand{\xrightarrowdbl}[2][]{%
  \xrightarrow[#1]{#2}\mathrel{\mkern-14mu}\rightarrow
}

\newtheorem{Thm}[equation]{Theorem}
\newtheorem*{Thm*}{Theorem}
\newtheorem*{MainThm*}{Main Theorem}
\newtheorem{Prop}[equation]{Proposition}
\newtheorem{Lem}[equation]{Lemma}
\newtheorem{Cor}[equation]{Corollary}

\newtheorem*{Que*}{Question}

\theoremstyle{remark}
\newtheorem{Def}[equation]{Definition}
\newtheorem{Ter}[equation]{Terminology}
\newtheorem{Not}[equation]{Notation}
\newtheorem{Exa}[equation]{Example}

\newtheorem{Rec}[equation]{Recollection}

\newtheorem{Rem}[equation]{Remark}

\tikzset{
    labelrotatebelow/.style={anchor=north, rotate=90, inner sep=1.0mm}
}
\tikzset{
    labelrotateabove/.style={anchor=south, rotate=90, inner sep=1.0mm}
}
\SelectTips{cm}{10}

\newcommand{\nc}{\newcommand}
\nc{\dmo}{\DeclareMathOperator}

\renewcommand{\emptyset}{\varnothing}

\nc{\Niko}[1]{{\color{Orange}#1}}
\nc{\Maxime}[1]{{\color{Green}#1}}
\nc{\Luca}[1]{{\color{Blue}#1}}

\usepackage{todonotes}

\nc{\overbar}[1]{\mkern 1.5mu\overline{\mkern-1.5mu#1\mkern-1.5mu}\mkern 1.5mu}
\nc{\cov}{\mathrm{cov}}
\nc{\et}{\acute{e}t}
\nc{\kappaaux}{g}
\nc{\kappaCh}{{\kappaaux(\cat C_h)}}
\nc{\kappam}{{\kappaaux({\mathfrak m})}}
\nc{\kappaP}{\Gamma_{\cat P}\unit}
\nc{\kappaQ}{{\kappaaux(\cat Q)}}
\nc{\kappaCP}{{\kappaaux_{\cat C}(\cat P)}}
\nc{\kappaDP}{{\kappaaux_{\cat D}(\cat P)}}
\nc{\kappaCQ}{{\kappaaux_{\cat C}(\cat Q)}}
\nc{\kappaDQ}{{\kappaaux_{\cat D}(\cat Q)}}
\nc{\kappaphiB}{{\kappaaux(\phi(\cat B))}}
\nc{\kappaphiQ}{{\kappaaux(\varphi(\cat Q))}}

\dmo{\Sub}{Sub}
\dmo{\tw}{tw}
\dmo{\Proj}{Proj}
\dmo{\LMod}{LMod}
\dmo{\cell}{cell}
\nc{\SpEn}{\cat S_{E(n)}}
\nc{\SpEnf}{\cat S_n}
\nc{\Lcomp}{L^{\mathrm{com}}} 
\nc{\Ucomp}{U^{\mathrm{com}}}
\nc{\Loco}[1]{\Loc_{\otimes}\hspace{-0.3ex}\langle #1 \rangle}
\nc{\bbullet}{{\scriptscriptstyle\hspace{-1pt}\bullet}}
\nc{\bullett}{{\scriptscriptstyle\bullet}\hspace{-1pt}}
\nc{\LF}{L\hspace{-0.2ex}F}
\nc{\SpG}{\Sp^G}
\nc{\EG}{\bbE_G}
\nc{\DEG}{\Der(\EG)}
\nc{\DE}{\Der(\bbE)}
\nc{\Prst}{{\cat P}\mathrm{r^{st}}}
\nc{\Mack}[2]{\mathrm{Mack}_{#1}(#2)}
\nc{\SC}{S\cat C}
\dmo{\fin}{{fin}}
\dmo{\DM}{DM}
\dmo{\fp}{fp}
\nc{\DMQ}{\DM_Q}
\dmo{\DerKal}{DMack}
\dmo{\Der}{D}
\dmo{\DMot}{DMot}
\dmo{\rmH}{H}
\dmo{\piu}{\underline{\pi}}
\dmo{\Sphere}{\mathbb{S}}
\dmo{\Alg}{Alg}
\dmo{\CAlg}{CAlg}
\nc{\HA}{{\rmH \hspace{-0.2em}\bbA}}
\nc{\HZ}{{\rmH \hspace{-0.2em}\bbZ}}
\nc{\HZbar}{{\rmH \hspace{-0.2em}\underline{\bbZ}}}
\nc{\Fp}{{\bbF_{\hspace{-0.1em}p}}}
\nc{\HFp}{{\rmH \hspace{-0.15em}\bbF_{\hspace{-0.1em}p}}}
\nc{\DHZG}{\Der(\HZ_G)}
\nc{\DHZH}{\Der(\HZ_H)}
\nc{\DHZK}{\Der(\HZ_K)}
\nc{\DHZGN}{\Der(\HZ_{G/N})}
\nc{\DHZGG}{\Der(\HZ_{G/G})}
\nc{\DHZCp}{\Der(\HZ_{C_p})}
\nc{\DHZGprime}{\Der(\HZ_{G'})}
\nc{\DHZ}{\Der(\HZ)}
\nc{\mathfrakp}{\mathfrak{p}}
\nc{\mathfrakq}{\mathfrak{q}}
\nc{\mathfrakS}{\mathfrak{S}}
\nc{\mathfrakT}{\mathfrak{T}}
\nc{\Z}{\mathbb{Z}}
\nc{\SSG}{\text{sSet}_*^G}
\nc{\sSet}{\text{sSet}}

\nc{\alg}{\mathrm{alg}}
\dmo{\csupp}{csupp}
\dmo{\Con}{Conj}
\dmo{\Id}{Id}
\dmo{\Loc}{Loc}
\dmo{\rmK}{\textrm{\rm K}}
\dmo{\Spc}{Spc}
\dmo{\thick}{thick}
\nc{\thickt}[1]{\thick_\otimes\langle #1 \rangle}
\nc{\loct}[1]{\loc_\otimes\langle #1 \rangle}
\dmo{\cone}{cone}
\dmo{\End}{End}
\dmo{\Derperf}{D_{perf}}
\dmo{\Mor}{Mor}
\dmo{\Hom}{Hom}
\dmo{\id}{id}
\dmo{\incl}{incl}
\dmo{\Img}{Im}
\dmo{\im}{im}
\dmo{\Ker}{Ker}
\dmo{\ind}{ind}
\dmo{\Ind}{Ind}
\dmo{\CoInd}{coind}
\dmo{\res}{res}
\dmo{\infl}{infl}
\dmo{\Derqc}{D_{qc}}
\dmo{\triv}{triv}
\dmo{\sep}{sep}
\dmo{\Tel}{Tel} 
\dmo{\grMod}{grMod}%
\dmo{\Mod}{Mod}%
\dmo{\opname}{op}
\dmo{\SH}{SH}
\dmo{\Shv}{Shv}
\dmo{\smallb}{b}
\dmo{\Spec}{Spec}
\dmo{\supp}{supp}
\dmo{\Supp}{Supp}
\dmo{\cosupp}{cosupp}
\dmo{\Cosupp}{Cosupp}
\nc{\SHc}{{\SH^c}}
\nc{\SHp}{{\SH_{(p)}}}
\nc{\SHcp}{{\SH^c_{(p)}}}
\nc{\SHG}{\SH(G)}
\nc{\SHGp}{\SH(G)_{(p)}}
\nc{\SHGc}{\SHG^c}
\nc{\SHGcp}{\SHG^c_{(p)}}
\nc{\quadtext}[1]{\quad\textrm{#1}\quad}
\nc{\qquadtext}[1]{\qquad\textrm{#1}\qquad}
\nc{\adj}{\dashv}
\nc{\adjto}{\rightleftarrows}
\nc{\bbL}{\mathbb{L}}
\nc{\bbA}{\mathbb{A}}
\nc{\bbE}{\mathbb{E}}
\nc{\bbN}{\mathbb{N}}
\nc{\bbQ}{\mathbb{Q}}
\nc{\bbZ}{\mathbb{Z}}
\nc{\bbF}{\mathbb{F}}
\nc{\cat}[1]{\mathscr{#1}}
\nc{\ie}{{\sl i.e.}, }
\nc{\into}{\mathop{\rightarrowtail}}
\nc{\inv}{^{-1}}
\nc{\isoto}{\mathop{\overset{\sim}\to}}
\nc{\isotoo}{\mathop{\overset{\sim}\too}}
\nc{\onto}{\mathop{\twoheadrightarrow}}
\nc{\too}{\mathop{\longrightarrow}\limits}
\nc{\mapstoo}{\longmapsto}
\nc{\adh}[1]{\overline{#1}}
\nc{\adhpt}[1]{\adh{\{#1\}}}
\nc{\aka}{{a.\,k.\,a.}\ }
\nc{\calF}{\mathcal{F}}
\nc{\eg}{{\sl e.\,g.}}
\nc{\Homcat}[1]{\Hom_{\cat #1}}
\nc{\hook}{\hookrightarrow}
\nc{\ideal}[1]{\langle #1\rangle}
\nc{\ihom}{{\underline{\hom}}}
\nc{\iHom}{\underline{\mathrm{Hom}}}
\nc{\Mid}{\,\big|\,}
\nc{\MMod}{\,\text{-}\Mod}%
\nc{\GrMMod}{\,\text{-}\grMod}%
\nc{\op}{^{\opname}}
\nc{\oto}[1]{\overset{#1}\to}
\nc{\otoo}[1]{\overset{#1}{\,\too\,}}
\nc{\sminus}{\!\smallsetminus\!}
\nc{\poplus}[1]{^{\oplus #1}}%
\nc{\potimes}[1]{^{\otimes #1}}
\nc{\sbull}{{\scriptscriptstyle\bullet}}
\nc{\SET}[2]{\big\{\,#1\Mid#2\,\big\}}
\nc{\SpcK}{\Spc(\cat K)}
\nc{\then}{\Rightarrow}
\nc{\unit}{\mathbb{1}}
\nc{\xra}{\xrightarrow}
\nc{\phigeom}[1]{\widetilde{\Phi}^{#1}}
\dmo{\Oname}{O}
\dmo{\proper}{proper}
\dmo{\lenormal}{\unlhd}
\dmo{\lnormal}{\lhd}
\nc{\normal}{\trianglelefteq}
\nc{\Op}{\Oname^p}
\nc{\Oq}{\Oname^q}
\dmo{\Sp}{Sp}
\dmo{\Ho}{Ho}
\dmo{\Fin}{Fin}
\dmo{\add}{add}
\dmo{\Fun}{Fun}
\dmo{\Ext}{Ext}

\dmo{\CMon}{CMon}
\dmo{\BB}{\cat B}
\dmo{\CC}{\cat C}
\dmo{\DD}{\cat D}
\dmo{\MM}{\cat M}
\dmo{\NN}{\cat N}
\dmo{\OO}{\mathcal{O}}
\dmo{\Map}{Map}
\dmo{\Span}{Span}
\dmo{\N}{N}
\dmo{\Cat}{Cat}
\dmo{\colim}{colim}
\dmo{\hocolim}{hocolim}
\dmo{\Ch}{Ch}
\dmo{\A}{\mathbb{A}^{eff}}
\nc{\AGeff}{\mathbb{A}_G^{\mathrm{eff}}}
\nc{\BGeff}{\mathcal{B}_G^{\mathrm{eff}}}
\nc{\BG}{{\mathcal{B}_G}}
\nc{\NBGeff}{{\N}{\BGeff}}
\dmo{\Ab}{Ab}
\dmo{\Set}{Set}
\dmo{\ev}{ev}
\dmo{\Spcl}{Spcl}
\nc{\Funadd}{\Fun_{\add}}
\dmo{\proj}{proj}
\dmo{\cof}{cof}

\nc{\StModfin}{\mathrm{StMod}^{\mathrm{fin}}}
\nc{\PrL}{\mathrm{Pr}^{\mathrm{L}}_{\mathrm{st}}}

\dmo{\Coideal}{Coideal}
\dmo{\gen}{gen}
\dmo{\Coloc}{Coloc}
\nc{\Coloco}[1]{\Coloc^{\iHom}\hspace{-0.3ex}\langle #1 \rangle}

\dmo{\dual}{dual}

\nc{\LOCO}{\mathcal{L}\mathrm{oc}_{\otimes}}
\nc{\COLOCO}{\mathcal{C}\mathrm{oloc}^{\iHom}}

\nc{\Perff}[1]{\mathrm{Perf}_{#1}}
\nc{\Modd}[1]{\mathrm{Mod}_{#1}}
\dmo{\Perf}{Perf}
\dmo{\tel}{tel}
\nc{\Lococat}[2]{\Loc^{#1}_{\otimes}\hspace{-0.3ex}\langle #2 \rangle}
\dmo{\rk}{rk}
\dmo{\StMod}{StMod}
\dmo{\stmod}{stmod}

\nc{\CSep}{\mathcal{C}\mathcal{S}\mathrm{ep}}
\nc{\CFin}{\mathcal{F}\mathrm{in}}
\nc{\Ccpl}[1]{\CC^{#1\text{-}\mathrm{cpl}}}
\nc{\Ctors}[1]{\CC^{#1\text{-}\mathrm{tors}}}
\nc{\Cloc}[1]{\CC^{#1\text{-}\mathrm{loc}}}
\nc{\fib}[1]{\mathrm{fib}}
\nc{\tors}{\mathrm{tors}}
\nc{\cpl}{\mathrm{cpl}}
\nc{\loc}{\mathrm{loc}}

\nc{\doublefaktor}[3]{%
    {\textstyle #1}
    \mkern-4mu\scalebox{1.5}{$\diagdown$}\mkern-5mu^{\textstyle #2}%
    \mkern-4mu\scalebox{1.5}{$\diagup$}\mkern-5mu{\textstyle #3} }

\nc{\QW}{W^{\text{Q}}}

\nc{\mT}{\kern-0.5em\mod\kern-0.1em\text{-}\cat{T}^c}
\nc{\mTc}{\kern-0.5em\mod\kern-0.1em\text{-}\cat{T}^c}
\nc{\MTc}{\Mod\kern-0.1em\text{-}\cat{T}^c}
\nc{\MT}{\Mod\kern-0.1em\text{-}\cat{T}}
\newcounter{enum-resume-hack}
\Crefname{Thm}{Theorem}{Theorems}
\Crefname{Prop}{Proposition}{Propositions}
\Crefname{Lem}{Lemma}{Lemmas}
\Crefname{thmx}{Theorem}{Theorems}

\begin{document}
\setlength{\parindent}{0cm}
\setlength{\parskip}{0.8ex}
\title[The Galois theory of $G$-spectra]{The Galois theory of $G$-spectra \\ and the Burnside ring}

\author[Naumann]{Niko Naumann}
\author[Pol]{Luca Pol}
\author[Ramzi]{Maxime Ramzi}
\date{\today}

\address{Niko Naumann, Fakult{\"a}t f{\"u}r Mathematik, Universit{\"a}t Regensburg, Universit{\"a}tsstraße 31, 93053 Regensburg, Germany}
\email{Niko.Naumann@mathematik.uni-regensburg.de}
\urladdr{https://homepages.uni-regensburg.de/$\sim$nan25776/}

\address{Luca Pol, Max Planck Institute for Mathematics, Vivatsgasse 7, 53111 Bonn, Germany}
\email{pol@mpim-bonn.mpg.de}
\urladdr{https://sites.google.com/view/lucapol/}

\address{Maxime Ramzi,
FachBereich Mathematik und Informatik, Universit{\"a}t M{\"u}nster, Einsteinstraße 62 Germany}
\email{mramzi@uni-muenster.de}
\urladdr{https://sites.google.com/view/maxime-ramzi-en/home}

\begin{abstract}
We prove that the Galois groupoid of the category of $G$-spectra for a finite group $G$ is algebraic, i.e. equivalent to the \'etale fundamental groupoid of the Burnside ring of $G$. We implement an algorithm that computes the latter from the table of marks of $G$, and provide numerous examples.
\end{abstract}

\subjclass[2020]{13B05, 55P42, 55P91, 55U35}

\maketitle
\setcounter{tocdepth}{1}
\tableofcontents

\section{Introduction}
One of the most basic invariants of a topological space is its fundamental group. 
While originally defined in terms of maps into the space, it can equivalently be 
recovered from its category of sheaves by identifying covering spaces with locally 
constant sheaves. From this perspective, the van Kampen theorem becomes particularly transparent. More importantly, this viewpoint enabled the Grothendieck school to attach a fundamental profinite groupoid $\pi_{\leq 1}^{\et}(X)$ to any scheme $X$, despite the absence of a reasonable underlying topology, by means of the category of 
\'etale sheaves on $X$, see \cite{SGA1},\cite{SGA1-2}.
When $X$ is of finite type over $\mathbb{C}$, the resulting invariant agrees with 
the classical fundamental groupoid of $X(\mathbb{C})$ after profinite completion, 
by the Riemann existence theorem.
\subsection*{Axiomatic Galois theory}
More recently, Rognes \cite{Rognes2008} and Mathew \cite{Mathew2016} isolated the categorical properties of the category of \'etale sheaves that allow one to associate to any presentably symmetric monoidal stable $\infty$-category $\cat C$ a profinite fundamental groupoid $\pi_{\leq 1}(\cat C)$, often referred to as the \emph{Galois groupoid}
of $\cat C$. This leads to a far-reaching generalization of classical Galois theory,
with strong descent properties for $\pi_{\leq 1}(-)$ that significantly extend the
van Kampen theorem. In particular, this framework recovers the examples above and
produces many new ones. These invariants admit a comparison with classical \'etale fundamental groupoids via decategorification. For any presentably symmetric
monoidal stable $\infty$-category $\cat C$, there is a natural map
\[ 
\pi_{\leq 1}(\cat C)\to \pi_{\leq 1}^{\et}(\pi_0(\End_\cat C(\unit_\cat C)))
\]
to the \'etale fundamental groupoid of the commutative ring obtained from $\cat C$
by passing to $\pi_0$ of the endomorphisms of the unit. If this map is an equivalence,
we say that the Galois theory of $\cat C$ is \emph{algebraic}.

\subsection*{Equivariant stable homotopy theory}
This abstraction unlocks interesting new examples, such as the $\infty$-category of spectra $\Sp$.  Mathew, building on work of Rognes, showed that the Galois theory of spectra is algebraic, and hence trivial:
\[
\pi_{\leq 1}^{\et}(\pi_0(\End_{\Sp}(\unit_\mathcal \Sp)))=\pi_{\leq 1}^{\et}(\Z)=\ast.
\]
The last equality follows from a classical consequence of Minkowski's theorem, namely
that $\mathbb{Z}$ admits no nontrivial finite \'etale extensions. One may interpret
this result as saying that the sphere spectrum $\Sphere$ is Galois-closed: any Galois cover is split. 

The situation is richer in the equivariant setting. Let $\Sp_G$ denote the
$\infty$-category of genuine $G$-spectra for a finite group $G$. In this case, a
result of Segal \cite{Segal_burnside} identifies the endomorphism ring of the unit as
\[
\pi_0(\End_{\Sp_G}(\unit_{\Sp_G}))=A(G)
\]
the Burnside ring of $G$. Our first main result shows that Mathew's algebraicity
theorem extends to this setting.

\begin{Thm*}[\ref{cor-algebraic}]
      For every finite group $G$, the comparison map 
      \[ 
      \pi_{\leq 1}(\Sp_G)\xrightarrow{\sim} \pi_{\leq 1}^{\et}(A(G))
      \]
      is an isomorphism.
\end{Thm*}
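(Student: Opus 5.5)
The approach is to use the descent properties of Mathew's Galois groupoid for the isotropy separation / geometric fixed point decomposition of $\Sp_G$, and to compare with a matching decomposition of $\Spec A(G)$.

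\textbf{Step 1: reduce to a finite limit.} The first step is to exhibit $\Sp_G$ as a limit of simpler categories along a descendable, or finite-cover, diagram. The natural candidate is the collection of geometric fixed point functors $\Phi^H\colon \Sp_G \to \Sp$ together with the stratification by conjugacy classes of subgroups. Concretely, I would use the family filtration. For each family $\mathcal F$ there is a recollement of $\Sp_G$ into $\mathcal F$-torsion and $\mathcal F$-local parts. On the local side, the categories $\Sp_G[\tilde E\mathcal F^{-1}]$ for $\mathcal F=\{K<H\}$ restricted to $H$ identify with $\Fun(BW_GH,\Sp)$, the category of spectra with Borel $W_GH$-action. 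Galois groupoids satisfy Mathew's van Kampen-type descent for such recollements (``stratified'' or ``gluing'' descent). This should express $\pi_{\leq 1}(\Sp_G)$ as a colimit of profinite groupoids built from the pieces $\pi_{\le1}(\Fun(BW_GH,\Sp)_{\text{complete}})$, glued along Tate-type intersections.

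\textbf{Step 2: compute the pieces.} I would invoke Mathew's computations. For $\Fun(BW,\Sp)$, suitably localized, the Galois groupoid is $BW$-like at each prime, so $p$-completion matters. The Borel-complete category $\Sp^{hW}$-modules has Galois group related to $\widehat W$, and after inverting $|W|$ it becomes $\pi_{\le1}$ of $\Sp[1/|W|]$, which is $\pi_1^{\et}(\Z[1/|W|])$. This is trivial up to the tame ramification allowed by Minkowski's theorem.

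\textbf{Step 3: algebraic side and comparison.} On the algebraic side, $\Spec A(G)$ is described by Dress: it is a union of copies of $\Spec\Z$, one for each conjugacy class $(H)$ via the mark homomorphisms $\phi_H$. Two copies are glued at a prime $p$ exactly when $O^p(H)\sim O^p(K)$. The étale fundamental groupoid of such a pushout of copies of $\Spec \Z$ glued along $\Spec\Fp$'s is computable by van Kampen for finite étale covers, i.e.\ descent along the finite map $\prod_{(H)}\Z\to A(G)$. Since $\pi_1^{\et}$ of each $\Spec\Z$ and of each $\Spec\Fp$-point... is governed by $\hat\Z$ only at the gluing points, the answer is a profinite completion of a graph-of-groups fundamental groupoid.

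I would then check that the comparison map, being natural in $\cat C$, is compatible with the decompositions of Step 1 and Step 3. On the pieces it is the map for $\Sp$ (Mathew: an equivalence) and for the gluing data. Equivalently, a cleaner route is to show directly that every Galois extension of $\unit_{\Sp_G}$ is detected by its image under $\pi_0$. That is, the functor from finite étale $A(G)$-algebras to commutative algebras in $\Sp_G$ (realizing étale algebras by the Lurie/Mathew étale realization, which exists since $\pi_0\End(\unit)=A(G)$) is essentially surjective onto Galois objects. For this I would check on geometric fixed points: $\Phi^H$ of a $G$-Galois extension is a Galois extension of $\Sphere$, hence split by Mathew's theorem. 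So the extension is ``pointwise split'', and the remaining data is the gluing, which lives in $\pi_0$.

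\textbf{Main obstacle.} I expect the hardest step to be this gluing. I must show that a commutative algebra in $\Sp_G$ whose geometric fixed points are all finite products of $\Sphere$ is étale over $\unit$, i.e.\ is determined by its $\pi_0$ as an $A(G)$-algebra. The difficulty is at the primes where strata meet, where Tate constructions $\Sphere^{tC_p}\simeq\Sphere_p^\wedge$ appear. Here I would try to show that the relevant gluing maps of $E_\infty$-rings $\prod\Sphere\to(\prod\Sphere)^{t}$ are determined up to contractible choice by $\pi_0$. The tool would be that the space of $E_\infty$-maps between finite products of $p$-complete spheres is discrete, because $\Sphere_p$ has no nontrivial $E_\infty$ endomorphisms. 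An induction on $|G|$ via the family filtration would then handle the rest.
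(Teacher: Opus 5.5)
\textbf{There is a genuine gap.} Your framework is the right one: the family filtration, the fracture squares of 2-rings, and the fact that $\pi_{\leq 1}$ turns finite limits of 2-rings into pushouts. But the proposal breaks down at the two corners of the fracture square where the actual work lies.

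\textbf{The Borel corner.} You assert that the Galois groupoid of Borel-complete $W$-spectra is ``related to $\widehat W$''. The argument instead needs, and the paper proves (\cref{prop-Galois-perfShG}), that $\pi_{\leq 1}(\Perf(\Sphere)^{hW})$ is \emph{trivial}. Your digression about inverting $|W|$ does not help: nothing is inverted in $\Sp_G$, and $\pi^{\et}_{\leq 1}(\Z[1/n])$ is far from trivial. Mathew's results only show that finite covers there have the form $\Sphere^X$ for a finite $W$-set $X$. One must still exclude $\Sphere^{W/H}$ for $H\neq W$. The paper does this in two steps. First, $A(W)^\wedge_I$ is Henselian with residue ring $\Z$, so the category is connected and nonzero covers are descendable. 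Second, the derived defect base of the Borel-complete sphere contains all $p$-subgroups for all primes $p$, which forces $H=W$. The same gap appears in your ``cleaner route''. Knowing that $\Phi^H A$ is split as a spectrum says nothing about the $W_G(H)$-action: a priori $\Phi^H A\simeq \Sphere^X$ for a nontrivial $W_G(H)$-set $X$, and ruling this out is exactly the triviality statement above.

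\textbf{The Tate corner.} You say the comparison map is an equivalence ``on the pieces and on the gluing data''. But nothing is known about the Galois theory of $\Perf(\Sphere)^{tW}$. Your proposed remedy, discreteness of $\mathbb E_\infty$-maps between products of $\Sphere^\wedge_p$, only applies when $\Sphere^{tW}\simeq \Sphere^\wedge_p$, essentially $W=C_p$. Already for $W=C_{p^2}$, the ring $\pi_0(\Sphere^{tW})\cong A(W)^\wedge_I/[W]$ is the connected, non-normal ring $\{(a,b)\in \Z_p^2 : a\equiv b \bmod p\}$. In any case, discreteness would not show that a glued algebra is realized by a finite \'etale $A(G)$-algebra.

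The paper never compares the Tate corner at all. It uses three facts. The Borel corner is a point on both the Galois and the algebraic side. The map $\pi_{\leq 1}\to \pi^{\alg}_{\leq 1}$ is surjective, i.e.\ fully faithful on Galois categories. Hence the face $\pi_{\leq 1}(\Perf(\Sphere)^{tW})\twoheadrightarrow \pi^{\alg}_{\leq 1}(\Perf(\Sphere)^{tW})$ over $*\to *$ is a pushout by \cref{lem-profinite}. Pasting inside a cube then runs the induction (\cref{prop-induction}).

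This also requires the algebraic side to be a pushout. The paper obtains that from three inputs (\cref{cor-pullback-on-perf}):
\begin{enumerate}
\item the Segal conjecture, which gives connectivity of $\Sphere^{hW}$ and $\Sphere^{tW}$;
\item Lurie's gluing of connective (hence perfect) modules along a $\pi_0$-surjection;
\item Mathew's algebraicity theorem for connective rings.
\end{enumerate}
Your Dress/van Kampen description of $\Spec A(G)$ does compute $\pi^{\et}_{\leq 1}(A(G))$. It does not, however, give a map-level comparison with the categorical pushout.
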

To the best of our knowledge, the \'etale fundamental groupoid of $A(G)$ has not
previously been computed. In our previous paper \cite{Separablepaper} we verified that for a $p$-group $G$, the $G$-sphere is Galois-closed. 
However, it came as a surprise to the authors that for a cyclic group of order six, it is not. This motivates the following detailed study of the
\'etale fundamental groupoid of the Burnside ring.

\subsection{The \'etale fundamental groupoid of the Burnside ring}
The starting point of our calculation is the familiar injective mark homomorphism
\[ A(G)\hookrightarrow\prod_{(H)}\mathbb Z.\]
Since this is faithfully flat, it allows to access $\pi_{\leq 1}^{\et}(A(G))$ via glueing the simply connected copies of $\Spec(\mathbb Z)$.
To simplify notation, we write 
\[
\mathfrak F(n):=B\widehat{F(n)}
\]
for the groupoid associated to the profinite completion of the free group $F(n)$ on $n\ge 0$ generators. Our first structural result identifies the \'etale fundamental groupoid of the Burnside ring with the profinite completion of a disjoint union of bouquets of circles.

\begin{Thm*}[\ref{thm:structure}]\label{thm:structure}
Let $G$ be a finite group. Then 
\[ \pi^{\et}_{\leq 1}(A(G))\simeq \bigsqcup_{i=1}^{n(G)}\mathfrak F(\alpha_i)\]
for uniquely determined integers $n(G)\ge 1$ and $\alpha_i\ge 0$.
\end{Thm*}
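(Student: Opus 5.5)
The plan is to compute $\pi^{\et}_{\leq 1}(A(G))$ by descent along the mark homomorphism $\phi\colon A(G)\hookrightarrow B:=\prod_{(H)}\mathbb Z$, indexed by conjugacy classes of subgroups. Three properties of $\phi$ drive the argument.

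\emph{Step 1: properties of $\phi$.} The map $\phi$ is finite and injective. Its cokernel is killed by $|G|$, so $\phi$ becomes an isomorphism after inverting $|G|$. The induced map $\Spec(B)\to\Spec(A(G))$ is a finite surjective morphism. By Dress' description of $\Spec A(G)$, the gluing it performs is entirely combinatorial. Two components $\Spec\mathbb Z_{(H)}$ and $\Spec\mathbb Z_{(K)}$ are glued at the prime $p$ exactly when $O^p(H)$ and $O^p(K)$ are conjugate. There is no gluing at the generic points or at primes not dividing $|G|$.

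\emph{Step 2: descent.} I would use that finite étale covers satisfy descent along finite surjective morphisms of finite presentation (proper descent for étale sheaves, or the paper's stated faithful-flatness route). This identifies $\mathrm{FEt}(A(G))$ with descent data for $\mathrm{FEt}(B)$ relative to $B\otimes_{A(G)}B$ and $B\otimes_{A(G)}B\otimes_{A(G)}B$. Since $\pi^{\et}_{\leq1}(\mathbb Z)=\ast$, each factor of $B$ is simply connected. A finite étale $B$-algebra is therefore just a finite set on each component $(H)$.

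The reduced rings $(B\otimes_{A(G)}B)_{\mathrm{red}}$ decompose into copies of $\mathbb Z$ on the diagonal and copies of $\mathbb F_p$ for the glued points. The relevant fact is that finite étale covers are insensitive to nilpotents. For the off-diagonal pieces, finite étale $\mathbb F_p$-algebras carry Frobenius monodromy. I must check that this monodromy is killed by the descent, because each glued point $\Spec\mathbb F_p$ lifts to $\Spec\mathbb Z$ on both sides, where the covers are constant. This check is the subtle point.

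The outcome is that $\mathrm{FEt}(A(G))$ is equivalent to the category of finite-set-valued local systems on a finite graph $\Gamma$, or more precisely on a 1-dimensional "cosimplicial" object. The vertices of $\Gamma$ are the conjugacy classes $(H)$ together with the points $(p,\mathfrak q)$ of $\Spec A(G)$ lying over primes $p\mid|G|$ where several $(H)$ meet. The edges join $(H)$ to the glued point it maps to. The triple overlaps impose only the cocycle condition at each glued point, and that condition is automatically satisfiable by transitivity of identifications. Hence there are no 2-cells, and $\Gamma$ is a graph.

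\emph{Step 3: identify the groupoid.} The Galois category of finite covering sets of a finite connected graph $\Gamma_i$ has fundamental group the profinite completion of $\pi_1(\Gamma_i)$. That group is free on $\alpha_i=1-\chi(\Gamma_i)=E_i-V_i+1$ generators. Taking $n(G)$ to be the number of connected components of $\Gamma$, which equals the number of connected components of $\Spec A(G)$ (the number of perfect conjugacy classes, by Dress), gives the stated decomposition $\bigsqcup_i\mathfrak F(\alpha_i)$.

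\emph{Uniqueness.} $n(G)$ is the number of connected components. Each $\alpha_i$ is recovered from $\widehat{F(\alpha_i)}$ through its abelianization $\widehat{\mathbb Z}^{\alpha_i}$.

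The main obstacle I expect is Step 2. The difficulty is justifying effective descent along the non-flat, finite surjective map $\phi$, and showing that the gluing loci and the triple overlaps contribute exactly a graph, with no residual Frobenius monodromy and no higher cells. I would first attempt the descent by $h$-descent or proper descent for finite étale covers applied to the finite surjection $\Spec B\to\Spec A(G)$, reducing to reduced schemes. I would then compute the fiber products explicitly using Dress' description of the primes.
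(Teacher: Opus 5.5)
Your argument is correct, and it takes a genuinely different route from the paper.

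\textbf{The paper's route.} The paper never descends along the whole mark homomorphism. It argues by descending induction over families. It shows that the square with corners $A(G)/\cat F$, $A(G)/(\cat F\cup K)$, $\mathbb Z$, $\mathbb Z/|W_G(K)|$ becomes a pushout under $\pi^{\et}_{\leq 1}$. This pushout is imported from topology:
\begin{itemize}
\item the fracture square of $2$-rings,
\item Lurie's theorem on modules over pullbacks of connective ring spectra,
\item algebraicity of Galois theory for connective rings,
\item the Segal conjecture together with henselian invariance, to identify the corners.
\end{itemize}
Proposition~\ref{prop:explicit_po} then shows that $\pi^{\et}_{\leq 1}(\mathbb Z/|W_G(K)|)\to\pi^{\et}_{\leq 1}(A(G)/\cat F\cup K)$ factors through $\pi_0$. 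So each step attaches $|\mathsf P(W_G(K))|$ one-cells to a new point.

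\textbf{Your route.} You replace all of this by one application of descent along the finite surjection $\Spec\prod_{(H)}\mathbb Z\to\Spec A(G)$, together with Dress's description of the glued points.

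\textbf{What each buys.} Your approach is self-contained commutative algebra, independent of the Segal conjecture and the gluing machinery. It gives the answer in closed form, $\alpha_i=E_i-V_i+1$ for your bipartite graph. That graph is essentially the data the paper's GAP algorithm reads off via CyclicExtensionsTom. The paper's route comes for free from its algebraicity proof and is organized along the family filtration. That filtration is what directly produces the recursion $\chi(\cat F)=\chi(\cat F\cup K)+1-|\mathsf P(W_G(K))|$, and hence the Weyl-group formula of Theorem~\ref{thm:solvable}. To reach that formula from your graph you need one extra group-theoretic fact: each $O^p$-class contains exactly one conjugacy class $(K)$ with $p\nmid|W_G(K)|$, namely the one with $K/O^p(K)$ Sylow in $W_G(O^p(K))$. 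With this, in the connected case $E-V+1$ becomes $\sum_{(K)\neq(G)}(|\mathsf P(W_G(K))|-1)$.

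\textbf{Points to state sharply.}
\begin{itemize}
\item \emph{The descent input.} You must use effective descent of finite \'etale covers along proper surjective morphisms of finite presentation (SGA~1, Exp.~IX), or $h$-descent in the sense of Rydh. Faithfully flat descent is not available, because $\phi$ is not flat. Already for $C_p$, the map $A(C_p)\cong\mathbb Z\times_{\mathbb F_p}\mathbb Z\to\mathbb Z^2$ has generic rank $1$ but fibre $\mathbb F_p^2$ at the singular point.
\item \emph{The Frobenius issue is not actually subtle.} On each $\mathbb F_p$-component of $(B\otimes_{A(G)}B)_{\mathrm{red}}$, both pullbacks of a cover are split, since they are pulled back from $\Spec\mathbb Z$. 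Isomorphisms between split covers over a connected base are just bijections of fibres. This is exactly your counterpart of the paper's factorization through $\pi_0$.
\item \emph{The triple overlaps are not automatic.} They impose a genuine condition: the diagonal forces identities, and the cocycle forces the bijections within each $p$-class to be transitive. It is this condition that collapses the complete graph on a class to your star, so ``no $2$-cells'' is the right conclusion.
\end{itemize}
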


We encode these invariants in an unordered list
\[
L = [\alpha_1,\dots,\alpha_{n(G)}].
\]
The proof of the above theorem leads to an explicit algorithm computing $L$ from the 
table of marks of $G$, which we implement in GAP. We highlight a few consequences:

\begin{itemize}
    \item If $G$ is solvable, then $L=[N]$, where
    \[
    N =
    \sum_{(K)\subsetneq (G)}
    \Bigl(
    |\{ p \text{ prime} : p \mid |W_G(K)|\}| - 1
    \Bigr),
    \]
    and the sum runs over conjugacy classes of proper subgroups $(K)$ and $W_G(K)$ denotes the Weyl group, see \cref{thm:solvable}.    
     \item If $G$ is a $p$-group, then the above formula implies that $\pi_{\leq 1}^{\et}(A(G))$ is trivial, i.e. 
    $L=[0]$. In this case the $G$-sphere spectrum is Galois-closed recovering \cite[Corollary 9.10]{Separablepaper}.
    \item If $G=\mathbb{Z}/p^{n+1}q$ for distinct primes $p \neq q$, then the above formula simplifies to $L=[n]$. 
    In particular, every finitely generated free profinite group arises in this way.
    
    \item We compute further examples in GAP, including large simple groups such as the 
    Mathieu group $M_{24}$; see \cref{tab:group_L}.
\end{itemize}
\subsection{Separable algebras vs Galois covers}
In \cite{Separablepaper}, we described, for a $p$-group $G$, all separable dualizable algebras in $G$-spectra: they are all ``standard'', that is, of the form $\Sphere_G^X$ for some finite $G$-set $X$. We also proved that for the smallest non-$p$-group (namely $C_6$, or more generally $C_{pq}$ for distinct primes $p,q$), there are some non-standard separable dualizable algebras. The presence of the latter seem to make a full calculation of the ``finite \'etale site'' of $\Sp_G$ quite intricate, and at present untractable. The present paper stems from the observation that if one restricts to Galois covers, the difficulties in question go away, and we are able to give a satisfactory description far beyond the $p$-group case.
\subsection{Outline}

In \Cref{sec:axiomatic}, we review Mathew's approach to Galois theory and collect 
some general results needed later.

In \Cref{sec:equivariant}, we recall the equivariant tools developed in our previous 
work, in particular a system of fracture squares refining isotropy separation, which 
allows us to study $\Sp_G$ and its Galois groupoid via descent.

In \Cref{sec:main}, we prove the algebraicity theorem by induction on families of 
subgroups. A key feature is that, although the fracture squares involve Tate 
constructions whose Galois theory is not well understood, their contribution to the 
gluing can still be controlled; this ultimately relies on the Segal conjecture.

In \Cref{sec:burnside}, we analyze the \'etale fundamental groupoid of $A(G)$.  In fact, our methods determine the 
entire \'etale homotopy type of $\Spec(A(G))$ but we leave the details to the interested reader.

Finally, we complement these structural results with an explicit algorithm computing 
$\pi_{\leq 1}^{\et}(A(G))$ from the table of marks, together with a collection of 
examples and accompanying GAP code.

\subsection{Acknowledgment}
We thank Akhil Mathew for pointing out the relevance of the Segal conjecture, and Scott Balchin for helping with the diagrams in \cref{sec-diagram}. We thank the anonymous referee of our paper \cite{Separablepaper} for suggesting we look at the Galois theory.

\section{Axiomatic Galois theory}
\label{sec:axiomatic}
In this section we recall the basic background on Galois theory which we will need for this paper. The main reference is \cite{Mathew2016}.

\begin{Rec}\label{rec-Galoiscorrespondence}
Following ideas of Grothendieck, Mathew \cite[Definition 5.15]{Mathew2016}, introduced the notion of a Galois category. The prototypical example of a Galois category is the functor category $\Fun(\mathcal{G}, \Fin)$ from a finite groupoid $\mathcal{G}$ to the category of finite sets. If $\mathcal{G}=BG$ for a finite group $G$, then we recover the usual category $\Fin_G$ of finite $G$-sets as an example of a Galois category. The collection of Galois categories can be organized into
a $(2,1)$-category $\mathsf{GalCat}$. The previous construction defines a functor 
\[
\mathsf{Grp}_{\fin}^{\mathrm{op}} \to \mathsf{GalCat}, \quad \mathcal{G} \mapsto \Fun(\mathcal{G}, \Fin)
\]
from the $(2,1)$-category of finite groupoids. Since the category of Galois category admits filtered colimits \cite[Proposition 5.34]{Mathew2016} the above functor extends uniquely to a functor 
\begin{equation}\label{Galois correspondence}
\mathrm{Pro}(\mathsf{Grp}_{\fin})^{\mathrm{op}} \simeq \Ind(\mathsf{Grp}_{\fin}^{\mathrm{op}}) \to\mathsf{GalCat}
\end{equation}
from the $(2,1)$-category of profinite groupoids. The Galois correspondence \cite[Theorem 5.36]{Mathew2016} states that the functor \eqref{Galois correspondence} is an equivalence. This gives us a way to access Galois categories via profinite groupoids, and viceversa. 
\end{Rec}
\begin{Not}\label{warning}
We write 
\[
    \begin{tikzcd}
     \mathcal G \arrow[r, two heads,"f"]  & \mathcal{G'} 
        \end{tikzcd}
  \]  
and refer to as ``surjective'', a map of profinite groupoids which corresponds, by the Galois correspondence, to a fully faithful functor of Galois categories. One can check that this is equivalent to $\pi_0(f)$ being bijective and $\pi_1(f,x)$ surjective for all $x\in\pi_0(\mathcal G)$. We omit the proof as we do not need to use this result.
\end{Not}

\begin{Lem}\label{lem-profinite}
    Every square of profinite groupoids of the form 
    \[
    \begin{tikzcd}
        \mathcal G \arrow[r, two heads,"f"] \arrow[d] & \mathcal{G'} \arrow[d] \\
        \ast \arrow[r] & \ast
    \end{tikzcd}
    \]
    is a pushout. 
\end{Lem}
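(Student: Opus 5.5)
Via the Galois correspondence of \cref{rec-Galoiscorrespondence}, I will translate the claim into a statement about Galois categories and prove it there. The functor \eqref{Galois correspondence} is a contravariant equivalence $\mathrm{Pro}(\mathsf{Grp}_{\fin})^{\mathrm{op}}\simeq \mathsf{GalCat}$, so pushouts of profinite groupoids correspond to pullbacks in $\mathsf{GalCat}$. The point $\ast$ corresponds to $\Fin$. Writing $\cat A=\Fun(\mathcal G',\Fin)$ and $\cat B=\Fun(\mathcal G,\Fin)$ (interpreted as filtered colimits if needed), the map $f$ corresponds to a functor $f^*\colon\cat A\to\cat B$, which is fully faithful by \cref{warning}. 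The maps to $\ast$ correspond to the "constant object" functors $\Fin\to\cat A$ and $\Fin\to\cat B$. It therefore suffices to show that the square
\[
\begin{tikzcd}
\Fin \arrow[r,"\id"] \arrow[d] & \Fin \arrow[d] \\
\cat A \arrow[r,"f^*"] & \cat B
\end{tikzcd}
\]
is a pullback in $\mathsf{GalCat}$.

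The main step is the following. Because $f^*$ is fully faithful, and hence (as a functor of $(2,1)$-categories with groupoid-valued mapping spaces) a monomorphism, the pullback of $\Fin\to\cat B$ along $f^*$ is the full subcategory of $\Fin$ on those objects whose image lies in the essential image of $f^*$. Every object of $\Fin$ maps to a constant object of $\cat B$, and a constant object is the image of the corresponding constant object of $\cat A$. So that full subcategory is all of $\Fin$, and the pullback is $\Fin$ itself. In more detail, I would compute the pullback in $\Cat_\infty$, or levelwise on the groupoid cores, as triples $(S,a,\phi\colon c_S\simeq f^*a)$. Fully faithfulness then identifies this with $\Fin$ via $S\mapsto (S,c_S,\mathrm{can})$, since the space of $(a,\phi)$ over $c_S$ is contractible.

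The step I expect to be the main obstacle is that pullbacks must be taken in $\mathsf{GalCat}$, not in $\Cat_\infty$. I would check that the computed pullback $\Fin$ is a Galois category, which is automatic, and that the comparison with the $\Cat_\infty$-pullback is valid. For this I would either cite Mathew's result that $\mathsf{GalCat}\to\Cat_\infty$ preserves finite limits, or argue directly with the universal property: a cone from a Galois category $\cat C$ is a pair of Galois functors $\cat C\to\Fin$ and $\cat C\to\cat A$ with an identification in $\cat B$. Since $f^*$ is fully faithful, the functor to $\cat A$ is determined up to contractible choice by the functor to $\Fin$, and any functor to $\Fin$ works. Hence the space of cones is equivalent to $\Map(\cat C,\Fin)$, which is exactly the universal property of the pullback.

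Alternatively, one can avoid categories entirely. Writing $\mathcal G=\lim\mathcal G_i$ in pro-groupoids, a pushout is tested by mapping into finite groupoids $\mathcal H$. A map $\mathcal G'\to\mathcal H$ whose restriction to $\mathcal G$ is trivialized extends uniquely to a trivialization on $\mathcal G'$, by bijectivity on $\pi_0$ and surjectivity on $\pi_1$. I would mention this as a sanity check, but the categorical argument above is cleaner and uses only \cref{warning} as a definition.
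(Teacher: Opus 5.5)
Your proposal is correct and follows essentially the paper's proof: pass to Galois categories, compute the pullback $P$ in categories (the paper cites Mathew's Lemma 5.37 for this), note that $P\to\Fin$ is fully faithful as a base change of the fully faithful $f^*$, and then show it is essentially surjective. For that last step the paper uses initiality of $\Fin$ among Galois categories to get a section $\Fin\to P$, which is the same point as your observation that constant objects lie in the essential image of $f^*$.
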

\begin{proof}
    By definition, we have to verify that the corresponding square of Galois categories 
    \[
     \begin{tikzcd}
        \Fin \arrow[r] \arrow[d] & \Fin \arrow[d] \\
        \cat C_{\mathcal{G'}} \arrow[r, hook, "f^*"] & \cat C_{\mathcal{G}}
    \end{tikzcd}
    \]
    is a pullback. Write $P$ for this pullback. By \cite[Lemma 5.37]{Mathew2016}, finite limits of Galois categories are calculated in categories, and this implies that the functor $P \hookrightarrow \Fin$ is fully faithful. Using that $\Fin$ is the initial Galois category, we find a functor $\Fin \to P$ which is the identity after composing with $P \hookrightarrow \Fin$. This shows that $P = \Fin$ as required. 
\end{proof}

There is a functorial way to associate a Galois category with every presentably symmetric monoidal stable $\infty$-category $\cat C \in \CAlg(\PrL)$: 

\begin{Def}\label{def-finite-cover}
    An object $A \in \CAlg(\cat C)$ is a \emph{finite cover} if there exists $A' \in \CAlg(\cat C)$ such that:
    \begin{enumerate}
        \item $A'$ is descendable: the thick tensor ideal generated by $A'$ is all of $\cat C$.
        \item  there is an equivalence 
 \[
 A \otimes A' \simeq \prod_{i=1}^n A'[e_{i}^{-1}] \qquad \mathrm{in} \;\;\CAlg(\Mod_{A'}(\cat C))
 \]
 where each $e_i$ is an idempotent of $A'$.
    \end{enumerate}
    The finite covers span a full subcategory $\CAlg^\cov(\cat C)\subseteq \CAlg(\cat C)$. 
\end{Def}
We will see later on that the finite covers give rise to a Galois category, and so can associated a profinite groupoid to it by the Galois correspondence. Before discuss this, we record the following nontrivial fact which we will need later in the paper.
\begin{Prop}\label{prop-descendable}
    Let $\cat C$ be a presentably symmetric monoidal stable $\infty$-category which is connected, i.e. the commutative ring $\pi_0(\unit)\coloneqq \pi_0(\End_{\cat C}(\unit))$ is indecomposable. Then any nonzero finite cover in $\cat C$ is descendable. 
\end{Prop}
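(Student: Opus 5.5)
Given a nonzero finite cover $A$, I will pick $A'$ descendable with $A\otimes A'\simeq\prod_{i=1}^n A'[e_i^{-1}]$, as in \cref{def-finite-cover}. The aim is to show that the thick tensor ideal generated by $A$ contains $\unit$. The basic tool is that descendability can be checked after a descendable base change, provided the base change is faithful enough. This is in the spirit of Mathew's results on finite covers (in \cite{Mathew2016}, finite covers are shown to be dualizable and to have a well-defined \emph{degree} function on $\pi_0$). I will use the idempotent decomposition of $A'$ together with the connectedness of $\pi_0(\unit)$.

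The steps, in order, are as follows.
\begin{enumerate}
\item Refine the cover. Replacing $A'$ by a product of localizations along finitely many idempotents, which is still descendable, I can assume $A'\simeq\prod_j A'_j$. On each factor $A'_j$, the base change $A\otimes A'_j$ is a \emph{finite product of copies} of $A'_j$, say $(A'_j)^{n_j}$. This requires splitting the $e_i$ into orthogonal idempotents, which is routine.
\item Show the degree is locally constant. The function $j\mapsto n_j$ should descend to a locally constant function on $\Spec \pi_0(\unit)$. Equivalently, it gives an idempotent decomposition of $\unit$, which I obtain by descending the idempotents of $A'$ that cut out $\{n_j=k\}$. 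This uses that $\CAlg$ of idempotent algebras satisfies descent along descendable maps: $\unit\simeq\lim \mathrm{Tot}(A'^{\otimes\bullet+1})$ in the pro-category. An idempotent of $A'$ that is compatible under both coface maps, since the degree is intrinsic to $A$, therefore comes from $\pi_0(\unit)$.
\item Conclude $n_j>0$. Since $\pi_0(\unit)$ is indecomposable, the degree is constant, $n_j=n$ for all $j$. If $n=0$, then $A\otimes A'=0$, and descendability of $A'$ forces $A=0$. Hence $n\ge 1$ because $A\neq 0$.
\item Deduce that $A$ is descendable. With $n\geq1$, $A\otimes A'\simeq A'^n$ contains $A'$ as a retract. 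Thus $A'$ lies in $\thickt{A}$, and so $\unit\in\thickt{A'}\subseteq\thickt{A}$, using the general fact that if $A\otimes A'$ is descendable over $A'$ and $A'$ is descendable then $A$ is descendable. More directly, $A'\in\thickt{A\otimes A'}\subseteq\thickt{A}$.
\end{enumerate}

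The main obstacle is step 2: making precise that the rank is a locally constant function on $\Spec\pi_0(\unit)$. Descending idempotents needs the fact that idempotents of $\pi_0$ satisfy descent along descendable maps. I would deduce this from the convergence of the Adams-type spectral sequence for $\unit\to A'$: for descendable maps the $\lim$ is a finite-stage pro-constant limit. That gives an equalizer $\pi_0(\unit)\to\pi_0(A')\rightrightarrows\pi_0(A'\otimes A')$ on idempotents, because idempotents are detected on $\pi_0$ and the spectral sequence has a horizontal vanishing line. Alternatively, I would simply cite Mathew's result that finite covers have a locally constant degree, \cite[\S6]{Mathew2016}.
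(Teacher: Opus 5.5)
Your proposal is correct, but it takes a different route from the paper. The paper gets local constancy from \cite[Theorem A]{NaumannPol}: a finite cover is the same as a separable commutative algebra with dualizable underlying object whose tt-degree on $\Spc(\cat C^{\dual})$ is locally constant. Indecomposability of $\pi_0(\unit)$ makes $\Spc(\cat C^{\dual})$ connected and nonempty, so $\deg(A)$ is a nonzero constant, and \cite[Lemma 5.19]{NaumannPol} turns this into descendability.

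You instead work directly with the descendable cover $A'$. You descend the rank idempotents to $\pi_0(\unit)$ and finish with the elementary retract argument $A'\in\thickt{A\otimes A'}\subseteq\thickt{A}$. This avoids the separable-algebra characterization and tt-geometry altogether, needing only descent of idempotents along descendable algebras. It also shows that only the single idempotent $\epsilon_0=\sum_{n_j=0}f_j$, which cuts out the vanishing locus of the rank, has to descend; constancy of the whole degree is not needed. The paper's version is shorter given its prior results.

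When you write out step 2, two points need a line of proof each.

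\emph{Coface compatibility.} Justify this directly rather than saying the degree is ``intrinsic.'' Base-change both splittings of $A$ to $B=A'\otimes A'$ along $d^0,d^1$, and refine by the idempotents $d^0(f_j)d^1(f_l)$. On each nonzero factor $B_{jl}$ you get $B_{jl}^{n_j}\simeq B_{jl}^{n_l}$ as $B_{jl}$-algebras. This forces $n_j=n_l$, because free modules over the nonzero commutative ring $\pi_0 B_{jl}$ have well-defined rank.

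\emph{Equalizer property for idempotents.} The cleanest argument is that idempotents of $\pi_0$ are corepresented in $\CAlg(\cat C)$ by $\unit\times\unit$ with discrete mapping spaces. Combine this with $\unit\simeq\mathrm{Tot}(A'^{\otimes\bullet+1})$ in $\CAlg(\cat C)$. Your spectral-sequence version also works: idempotents in $E_2^{0,0}$ are permanent cycles by the Leibniz rule, and positive-filtration elements are nilpotent, so idempotents lift uniquely.
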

\begin{proof}
    By \cite[Theorem A]{NaumannPol} a finite cover $A$ of $\cat C$ can equivalently be described as a separable commutative algebra with dualizable underlying object, with associated tt-degree function 
    \[
    \deg(A) \colon \Spc(\cat C^{\dual}) \to \Z
    \]
    which is locally constant.  Since $\cat C$ is connected, the Balmer spectrum $\Spc(\cat C^{\dual})$ is connected and non-empty by \cite[Proposition 3.7]{NaumannPol} and so the tt-degree function of $A$ must be constant. If $A$ is nonzero, then $\deg (A)$ is nonzero, too. Finally, we apply \cite[Lemma 5.19]{NaumannPol} to conclude.
\end{proof}
\begin{Lem}\label{lem-galois-fincov}
    Let $\cat C$ be a presentably symmetric monoidal stable $\infty$-category. Then $\CAlg^{\cov}(\cat C)\op$ is a Galois category. Moreover, every finite cover in $\cat C$ has underlying object that is dualizable in $\cat C$. 
    Finally, if $F \colon \cat C \to \cat D$ is a map in $\CAlg(\PrL)$ that induces an equivalence on dualizable objects, then it induces an equivalence on finite covers. 
\end{Lem}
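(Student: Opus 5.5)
The lemma has three parts, and I would handle them in order: first dualizability, then the Galois category structure, then invariance under $F$.

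\textbf{Dualizability.} Let $A$ be a finite cover, witnessed by $A'$ as in \cref{def-finite-cover}. The object $A\otimes A'\simeq \prod_{i=1}^n A'[e_i^{-1}]$ is a finite product of retracts of $A'$. It is therefore dualizable in $\Mod_{A'}(\cat C)$, since each $A'[e_i^{-1}]$ is a direct summand of the unit $A'$. Dualizability descends along descendable algebras. Concretely, the class of $X\in\cat C$ for which $A\otimes X$ has the relevant property is thick, and it contains the unit because it contains $A'$. I would use the form of this argument from \cite[Proposition 3.28]{Mathew2016}, namely that $\cat C\to\Mod_{A'}(\cat C)$ exhibits $\cat C$ as the totalization of the cobar construction when $A'$ is descendable. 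Dualizable objects are detected termwise in a limit of symmetric monoidal $\infty$-categories. Since $A\otimes A'^{\otimes k}$ is dualizable over $A'^{\otimes k}$ for every $k$ (by base change of the splitting), $A$ is dualizable in $\cat C$.

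\textbf{Galois category.} I would follow Mathew's construction. The finite covers are exactly the weak finite covers relative to descendable algebras, and \cite[Section 6]{Mathew2016} shows that $\CAlg^{\cov}(\cat C)\op$ satisfies the axioms of \cite[Definition 5.15]{Mathew2016}. I would check the axioms directly using descent. After base change to a suitable $A'$, every object becomes a finite product of copies of the unit on idempotent pieces. This reduces finite limits and colimits, quotients by finite groups, and the decomposition into connected objects to the split case. The split case is $\Fun(\mathcal G,\Fin)$ locally on $\Spec$ of the idempotents. The closure statements, namely finite products, tensor products, and equalizers in $\CAlg$ preserving covers, follow because the class of descendable $A'$ is closed under tensor product, so two covers can be split simultaneously. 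Since the result is essentially \cite[Proposition 6.?]{Mathew2016}, I would cite it rather than reprove every axiom.

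\textbf{Invariance.} Let $F$ induce an equivalence $\cat C^{\dual}\simeq\cat D^{\dual}$. By the first part, finite covers lie in $\CAlg(\cat C^{\dual})$, so $F$ induces a fully faithful functor $\CAlg^{\cov}(\cat C)\to\CAlg(\cat D^{\dual})$. Full faithfulness holds because mapping spaces of commutative algebras depend only on the symmetric monoidal $\infty$-category in which they live. Two points remain.
\begin{itemize}
\item \emph{Covers go to covers.} By \cite[Theorem A]{NaumannPol}, finite covers are exactly the separable commutative algebras with dualizable underlying object and locally constant tt-degree. All of these conditions are intrinsic to the symmetric monoidal category $\cat C^{\dual}$, since $\Spc(\cat C^{\dual})$ and the degree are defined there. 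Hence they are preserved by $F$.
\item \emph{Essential surjectivity.} The same characterization shows that a cover in $\cat D$ comes from a commutative algebra in $\cat C^{\dual}$ with the same properties, which is a cover in $\cat C$.
\end{itemize}

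The main obstacle is this last step. The definition of finite cover refers to a descendable $A'$ that need not be dualizable, so preservation is not obvious from the definition alone. The intrinsic characterization of \cite{NaumannPol} is what circumvents this, and I would rely on it as the key input.
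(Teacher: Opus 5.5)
Your proposal is correct, but for the invariance claim it takes a different and heavier route than the paper.

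\textbf{The paper's route.} The paper gets all three claims from \cite[Theorem 6.5]{Mathew2016}. Its proof shows more than that a finite cover $A$ is dualizable: the descendable witness $A'$ in \cref{def-finite-cover} can itself be chosen dualizable. Descendability of a dualizable algebra can be tested inside $\cat C^{\dual}$ \cite[Lemma 7.15]{descent}. The splitting $A\otimes A'\simeq\prod_i A'[e_i^{-1}]$ then involves only dualizable objects. So the whole definition of a finite cover becomes intrinsic to the symmetric monoidal $\infty$-category $\cat C^{\dual}$, and invariance under $F$ is immediate. The obstacle you flag at the end, a possibly non-dualizable $A'$, disappears once you note that $A'$ may be taken dualizable.

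\textbf{Your route.} You instead invoke \cite[Theorem A]{NaumannPol}: finite covers are the separable algebras with dualizable underlying object and locally constant tt-degree on $\Spc(\cat C^{\dual})$. This characterization is also intrinsic to $\cat C^{\dual}$, and the paper uses it in the same form in \cref{prop-descendable}. So your argument goes through. The trade-off is this: you rely on a substantially deeper theorem than the lemma needs, while the paper needs only Mathew's construction and a thick-ideal lemma. In return, your characterization makes no reference to an auxiliary algebra at all.

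\textbf{Minor points.} Your cobar-totalization argument for dualizability is sound: comonadicity of $\cat C\to\Mod_{A'}(\cat C)$ for descendable $A'$, plus termwise detection of dualizability in a limit. The ``concretely'' thick-subcategory sentence before it does not work as written. $A\otimes A'$ is dualizable over $A'$, but it need not be dualizable in $\cat C$ unless $A'$ is, so drop that sentence. The Galois-category claim is \cite[Theorem 6.5]{Mathew2016}, not a Proposition 6.?.
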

\begin{proof}
    The first claim is proved in \cite[Theorem 6.5]{Mathew2016}. The proof of \cite[Theorem 6.5]{Mathew2016} shows that any finite cover $A$ has dualizable underlying object, and that the descendable algebra $A'$ as in \cref{def-finite-cover} can be chosen to be dualizable too. Since descendability of such $A'$ can be checked in the subcategory of dualizable objects, e.g. by \cite[Lemma 7.15]{descent}, we obtain the final claim as well.
\end{proof}

\begin{Def}\label{def-Galois-group}
    The \emph{Galois groupoid} of $\cat C$ is the profinite groupoid $\pi_{\leq 1}(\cat C)$ associated via the Galois correspondence to $\CAlg^{\cov}(\cat C)\op$. If $\cat C$ is connected, the Galois groupoid can be represented by a profinite group. 
\end{Def}

\begin{Rec}\label{rec-algebraic}
   Given a presentably symmetric monoidal stable category $\cat C$, we can consider the associated category 
   \[
   \mathrm{Cov}_{\pi_0(\unit)}
   \]
   of finite \'etale algebras over $\pi_0(\unit)$. By \cite[Proposition 6.10]{Mathew2016}, there is a fully faithful functor of Galois categories
   \[
   \mathrm{Cov}_{\pi_0(\unit)}\subseteq \CAlg^{\cov}(\cat C)\op,
   \]
  classified by a surjective map of profinite groupoids
   \begin{equation}\label{alg-map}
   \pi_{\leq 1}(\cat C) \twoheadrightarrow \pi_{\leq 1}^\alg(\cat C)
   \end{equation}
   which is natural in $\cat C$. 
   Here, $\pi_{\leq 1}^\alg(\cat C)$ denotes the \'etale fundamental groupoid of the ring $\pi_0(\unit)$ considered in \cite{SGA1}, which we also denote as $\pi_{\leq 1}^{\et}(\pi_0(\unit))$. 
\end{Rec}
\begin{Ter}
    We say that the Galois groupoid of $\cat C$ is \emph{algebraic} if the map \eqref{alg-map} is an equivalence. 
\end{Ter}
\begin{Exa}\label{ex-Galois-R}
    Let $R$ be a discrete commutative ring and let $\Mod(R)$ denote the $\infty$-category of $R$-module in spectra. Then 
    \[
    \pi_{\leq 1}(\Mod(R))\simeq \pi_{\leq 1}^{\alg}(\Mod(R))=\pi_{\leq 1}^{\et}(R)
    \]
    by \cite[Theorem 6.17]{Mathew2016}.
\end{Exa}
Later on in this paper, it will be convenient to work with $2$-rings $\CAlg(\mathrm{Cat}_{\infty}^{\mathrm{perf}})$ rather than with their large variant $\CAlg(\PrL)$.
\begin{Def}\label{rem-galois-dual}
     Given a $2$-ring $\cat R$ we define its Galois groupoid $\pi_{\leq 1}(\cat R)$ (resp. algebraic Galois groupoid $\pi_{\leq 1}^{\alg}(\cat R)$) as the Galois groupoid (resp. algebraic Galois groupoid) associated to $\Ind(\cat R)$. Note that for any $\cat C \in \CAlg(\PrL)$, the canonical map $\Ind(\cat C^{\dual}) \to \cat C$ induces an isomorphism on Galois groupoids by \cref{lem-galois-fincov}. 
\end{Def}

\section{Equivariant tools}
\label{sec:equivariant}
In this section, we recall the stratification of genuine $G$-spectra required for our computation of the Galois groupoid, following \cite{Glueingpaper}. We refer the reader there for further details.
\begin{Rec}
    For a finite group $G$, we let $\Sp_G$ denote the $\infty$-category of genuine $G$-spectra. There are several equivalent constructions of this $\infty$-category: 
    \begin{itemize}
        \item as the underlying $\infty$-category associated with the category of orthogonal $G$-spectra with the stable model structure of \cite{Mandell-May}.
        \item from the $\infty$-category of pointed $G$-spaces $\cat S_{G,\ast}$ by inverting representation spheres, see \cite[Definition C.1]{Gepner-Meier}.
        \item as spectral Mackey functors, see \cite[Appendix A]{CANN2024}.
    \end{itemize}
    The $\infty$-category $\Sp_G$ is a presentably symmetric monoidal stable $\infty$-category that comes equipped with a symmetric monoidal colimit-preserving suspension functor $ \cat S_{G,\ast} \to \Sp_G$. In fact, the $\infty$-category $\Sp_G$ admits a set of compact and dualizable generators $G/H_+$ for $H \subseteq G$ (we omit writing the suspension spectrum functor).  For more background on equivariant stable homotopy theory we refer the reader to \cite{LMS86}, \cite[Appendices A and B]{HHR} and \cite{MNN}.
\end{Rec}
\begin{Not}
Let $G$ be a finite group and $\cat F$ be a family of subgroups of $G$. We write $\Sp_G/ \cat F$ for the smashing localization of $\Sp_G$ which annihilates all the orbits $G/H_+$ with $H \in \cat F$. By definition there is a smashing localization functor 
\[
L_{\cat F} \colon \Sp_G \to \Sp_G/\cat F
\]
whose idempotent algebra is given by $\widetilde{E\cat F}$.
\end{Not}

\begin{Ter}\label{ter-minimally-not-F}
     Let $G$ be a finite group and $\cat F$ be a family of subgroups of $G$. We say that a subgroup $K\subseteq G$ is \emph{minimally} not in $\cat F$ if $K \not \in \cat F$, but any proper subgroup of $K$ belong to $\cat F$. The smallest family $\cat F\cup K$ containing $\cat F$ and $K$ adds to $\cat F$ all $G$-conjugates of $K$. In the literature, the families $\cat F$ and $\cat F\cup K$ are referred to as \emph{adjacent}.
\end{Ter}
 The key ingredient that we will need for our computation of the Galois groupoid of $G$-spectra is the following:
 
\begin{Prop}\label{prop-key-pullback}
    Let $\cat F$ be a family of subgroups of $G$ and let $K\subseteq G$ be a subgroup which is minimally not in $\cat F$. Then there is a pullback of $2$-rings
    \[
    \begin{tikzcd}
        (\Sp_G/\cat F)^\omega \arrow[r, "L_{\cat F\cup K}"]\arrow[d, "\Phi^K"'] & (\Sp_G/\cat F \cup K)^\omega\arrow[d,"\widetilde{\Phi}^K"] \\
        \Perf(\Sphere)^{hW_G(K)} \arrow[r] & \Perf(\Sphere)^{tW_G(K)}
    \end{tikzcd}
    \]
    where $\Phi^K$ denotes the $K$-geometric fixed points functor. 
\end{Prop}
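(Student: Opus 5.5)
The plan is to obtain the square as a recollement (fracture square) for the localization of $\Sp_G/\cat F$ away from the single extra stratum $K$. Concretely, inside $\cat C:=\Sp_G/\cat F$ consider the idempotent algebra $L_{\cat F\cup K}\unit=\widetilde{E(\cat F\cup K)}\otimes\widetilde{E\cat F}$. Its fiber is the torsion object $\Gamma_K\unit$, which generates the localizing ideal spanned by $L_{\cat F}(G/K_+)$. The general theory of smashing localizations gives a pullback of presentable stable categories
\[
\cat C\simeq \Cat C^{\Gamma_K\text{-cpl}}\times_{(\cdot)} \cat C[\widetilde{E(\cat F\cup K)}^{-1}],
\]
with gluing along the Tate-type functor $X\mapsto L_{\cat F\cup K}(\text{completion of }X)$. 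This is the standard fracture square, as in \cite{Glueingpaper}.

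The next step identifies the two remaining corners. By minimality of $K$, the object $L_{\cat F}(G/K_+)$ has endomorphisms computed by geometric fixed points. The functor $\Phi^K$ restricted to the torsion part should give an equivalence between the $K$-torsion part of $\Sp_G/\cat F$ and $\Fun(BW_G(K),\Sp)$. The reason is that $\Phi^K(G/K_+)=W_G(K)_+$, and the endomorphism ring of $L_{\cat F}G/K_+$ is $\Sphere[W_G(K)]$: the double cosets $KgK$ with $g\notin N_G(K)$ involve intersections $K\cap {}^gK$, which are proper subgroups of $K$ and hence lie in $\cat F$, so they are killed. Schwede–Shipley Morita theory then identifies the torsion category with $\Mod_{\Sphere[W]}=\Fun(BW,\Sp)$. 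Under this equivalence, the complete objects become $\Sp^{hW}$-type objects, and the gluing functor becomes the Tate construction $(-)^{tW}$. Taking the unit component, the corner $\Gamma_K$-complete unit corresponds to $\Sphere^{hW}$, and the Tate corner to $\Sphere^{tW}$.

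Finally, we pass to compact objects. Because the recollement is symmetric monoidal, the pullback is one of symmetric monoidal categories. We then restrict to the thick subcategories generated by the units, or equivalently by the images of the compact generators $G/H_+$. We need to check that $(\Sp_G/\cat F)^\omega$ maps to $\Perf(\Sphere)^{hW}$, i.e. $\Fun(BW,\Perf(\Sphere))$, which holds because $\Phi^K(G/H_+)=(G/H)^K_+$ is a finite free-ish $W$-set. The Tate corner $\Perf(\Sphere)^{tW}$ is to be read as the idempotent-complete Verdier quotient $\Fun(BW,\Perf)/\text{(induced)}$. The pullback of small categories is then checked on mapping spectra, via the fracture square $\map(X,Y)\simeq \map^{hW}\times_{\map^{tW}}\map_{\loc}$. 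Essential surjectivity onto the pullback follows because any object of the pullback is obtained from a compact torsion piece and a compact local piece.

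The main obstacle is the middle step: proving that the $K$-torsion, $\cat F$-local part is exactly Borel $W_G(K)$-spectra, with the gluing functor equal to the Tate construction and not some twisted variant. I would first do the Morita computation of $\End(L_{\cat F}G/K_+)$ via the double coset formula, and then identify the completion as $\lim$ over the Čech nerve of $EW_+$.
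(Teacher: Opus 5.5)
Your key computation is the right input: by minimality of $K$ one gets $\End(L_{\cat F}G/K_+)\simeq\Sphere[W]$ with $W=W_G(K)$, so $\Phi^K$ identifies $\thick\langle L_{\cat F}G/K_+\rangle$ with $\thick\langle W_+\rangle\simeq\Perf(\Sphere[W])$. The paper does not reprove the statement. It cites the small-category glueing result \cite[Corollary 7.12]{Glueingpaper} and uses \cite[Example 4.14]{Separablepaper} to recognize the corner as $\Perf(\Sphere)^{tW}$.

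The way you pass from your computation to a strict pullback does not work as written. The large fracture square is not a pullback of presentable categories. The recollement exhibits $\Sp_G/\cat F$ only as a lax limit, whose objects are triples $(Y,U,U\to L\Lambda Y)$. For instance, for $G=C_p$, $\cat F=\emptyset$, $K=e$ and $X=\Sphere$, the structure map $\Phi^{C_p}\Sphere=\Sphere\to\Sphere^{tC_p}$ is $p$-completion, not an equivalence. So there is no pullback in which to ``pass to compact objects''. Three further problems follow.
\begin{itemize}
\item $\Lambda$ sends compacts to dualizable but non-compact objects of $\Fun(BW,\Sp)$.
\item $\thick\langle\unit\rangle$ is in general strictly smaller than $(\Sp_G/\cat F)^\omega$ (already $C_{p+}\notin\thick\langle\unit\rangle$ in $\Sp_{C_p}$). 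So your two descriptions of the top-left corner are not equivalent.
\item The gluing functor $L\Lambda$ runs from complete to local objects, whereas the square needs a functor $\widetilde\Phi^K$ \emph{out of} $(\Sp_G/\cat F\cup K)^\omega$. You never construct it. It is the functor induced by $\Phi^K$ on Verdier quotients, using $\Phi^K(\thick\langle L_{\cat F}G/K_+\rangle)\subseteq\thick\langle W_+\rangle$ and Neeman's identification of $(\Sp_G/\cat F\cup K)^\omega$ with the idempotent completion of $(\Sp_G/\cat F)^\omega/\thick\langle L_{\cat F}G/K_+\rangle$.
\end{itemize}

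Hence the statement has to be proved directly for the small categories, as you suggest at the end. The two steps you leave open are the actual content.

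\emph{Full faithfulness.} The large recollement does give $\Map(X,Y)\simeq\Map(X,\Lambda Y)\times_{\Map(X,L\Lambda Y)}\Map(LX,LY)$. Your Morita equivalence gives $\Map(X,\Lambda Y)\simeq\Map(\Phi^KX,\Phi^KY)^{hW}$, once you check that it sends $\Gamma X$ to $\Phi^KX$ for every $X$, not just for the generator. You must still show that mapping spectra in $\Fun(BW,\Perf(\Sphere))/\thick\langle W_+\rangle$ are $\Map(\Phi^KX,\Phi^KY)^{tW}$; this uses that the norm map is an equivalence on $\thick\langle W_+\rangle$. You must also show that $\Map(X,L\Lambda Y)$ is identified with these compatibly with $\widetilde\Phi^K$. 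This is precisely the input the paper imports from \cite[Example 4.14]{Separablepaper}.

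\emph{Essential surjectivity.} ``Obtained from a compact torsion piece and a compact local piece'' is not an argument, since an object $(M,U,\phi)$ of the pullback comes with no such decomposition. What works:
\begin{itemize}
\item By Thomason's theorem, $U\oplus\Sigma U\simeq LX$ for some compact $X$.
\item Then $\phi\oplus\Sigma\phi$ is an isomorphism between $M\oplus\Sigma M$ and $\Phi^KX$ in the Verdier quotient. It is represented by a roof whose cones lie in $\thick\langle W_+\rangle$, hence are images of torsion compacts by your Morita computation.
\item The essential image of a fully faithful exact functor out of the idempotent complete category $(\Sp_G/\cat F)^\omega$ is closed under extensions and retracts. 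Hence $(M,U,\phi)$ lies in it.
\end{itemize}
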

\begin{proof}
    See \cite[Corollary 7.12]{Glueingpaper} where we can identify the bottom right corner with the Tate construction by \cite[Example 4.14]{Separablepaper}.
\end{proof}
We next recall some background on the Burnside ring. Standard references are \cite{Tomdieck} and \cite{Bouc}.
\begin{Rec}\label{rec-Burnside}
    For a finite group $G$, we let $A(G)$ denote the Burnside ring of $G$, i.e. the Grothendieck ring of finite $G$-sets. The addition (resp. multiplication) in $A(G)$ is induced by disjoint union of $G$-sets (resp. their cartesian product). It is easy to see that $A(G)$ is a free $\Z$-module with basis given by the cosets $[G/H_1],\ldots ,[G/H_s]$ for a choice $H_1,\ldots, H_s$ of representatives of the conjugacy classes of subgroups of $G$. There is an augmentation ring homomorphism 
    \[
    \epsilon\colon A(G) \longrightarrow \Z\,  \qquad \sum_i a_i [G/H_i] \mapsto \sum_i a_i\cdot |G/H_i|
    \]
    with augmentation ideal denoted by $I(G):=\mathrm{ker}(\epsilon)$ or simply $I$. 
    If $\cat F$ is a family of subgroups of $G$, we write $A(G)/\cat F$ for the quotient of $A(G)$ by the ideal generated by all $[G/H]$ with $H\in\cat F$.
\end{Rec}

\begin{Prop}\label{prop-pullback-unit}
    Passing to the endomorphism rings of the unit objects in \cref{prop-key-pullback}, we obtain the following pullback of connective $\bbE_\infty$-rings
     \[
    \begin{tikzcd}
        (\widetilde{E\cat F})^G\arrow[r]\arrow[d] & (\widetilde{E\cat F \cup K})^G\arrow[d] \\
        \Sphere^{hW_G(K)} \arrow[r] & \Sphere^{tW_G(K)}.
    \end{tikzcd}
    \]
    Applying $\pi_0$ we obtain the following commutative square of commutative rings 
    \begin{equation}\label{importantsquare}
    \begin{tikzcd}
        A(G)/\cat F\arrow[r,"q_K"]\arrow[d,"(-)^K"'] & A(G)/(\cat F \cup K)\arrow[d,"\widetilde{(-)}^K"] \\
        A(W_G(K))^\wedge_{I}\arrow[r,"\mathrm{quot}"] & A(W_G(K))^\wedge_I/[W_G(K)].
    \end{tikzcd}
    \end{equation}
    where 
    \begin{itemize}
    \item $q_K$ factors out the principal ideal generated by $[G/K]$, 
    \item the vertical arrows are induced by the $K$-fixed points via $[X] \mapsto [X^K]$, 
    \item $\mathrm{quot}$ is the quotient map. 
    \end{itemize}
\end{Prop}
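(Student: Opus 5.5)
The argument has two parts: produce the pullback of endomorphism rings, then identify each corner on $\pi_0$ together with the maps.

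For the first part, I would use that the functor $\cat R\mapsto \End_{\cat R}(\unit)$ from $2$-rings to $\bbE_\infty$-rings preserves pullbacks. In a pullback of stable $\infty$-categories, mapping spectra are computed as pullbacks of mapping spectra. Applying this to the unit in \cref{prop-key-pullback} gives a pullback of $\bbE_\infty$-rings.

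Next I identify the corners. The unit of $(\Sp_G/\cat F)^\omega$ is $\widetilde{E\cat F}$, and since $L_{\cat F}$ is smashing,
\[
\End(\unit)\simeq \Map_{\Sp_G}(\Sphere_G,\widetilde{E\cat F})=(\widetilde{E\cat F})^G .
\]
The same holds for $\cat F\cup K$. In $\Perf(\Sphere)^{hW}$ the unit has endomorphisms $\Sphere^{hW}$. In $\Perf(\Sphere)^{tW}$, read as the Verdier quotient by the thick ideal generated by $W_+$ (via \cite[Example 4.14]{Separablepaper}), they are $\Sphere^{tW}$. Connectivity of $\Sphere^{hW}$ and $\Sphere^{tW}$ is not automatic, so I would take it from the Segal conjecture: $\Sphere^{hW}\simeq (\Sphere_W)^W{}^\wedge_I$ is connective, and $\Sphere^{tW}$ is the cofibre of the norm $\Sphere_{hW}\to\Sphere^{hW}$, whose $\pi_{-1}$ vanishes because $\pi_0$ of the norm is surjective onto the ideal generated by $[W]$. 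For the top corners, the geometric fixed points description $(\widetilde{E\cat F})^G$ is connective, since $\widetilde{E\cat F}$ is a connective $G$-spectrum built from $S^0$ by cells.

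Then I compute $\pi_0$ of each corner.
\begin{itemize}
\item Top corners: the cofibre sequence $E\cat F_+\to S^0\to\widetilde{E\cat F}$ shows $\pi_0^G(\widetilde{E\cat F})=A(G)/\cat F$. Indeed, $\pi_0^G(E\cat F_+)$ maps onto the ideal spanned by $[G/H]$ with $H\in\cat F$; one sees this by building $E\cat F_+$ from orbits $G/H_+$ with $H\in\cat F$ and using that $\pi_{-1}^G(E\cat F_+)=0$, by connectivity.
\item Right-hand quotient: the same argument shows that $q_K$ kills the additional generator $[G/K]$. Since $K$ is minimally not in $\cat F$, the only new orbit is $G/K$.
\item Bottom left: by the Segal conjecture, $\pi_0\Sphere^{hW}=A(W)^\wedge_I$.
\item Bottom right: from the norm cofibre sequence and $\pi_0\Sphere_{hW}=\bbZ$ mapping to $[W]$, $\pi_0\Sphere^{tW}=A(W)^\wedge_I/[W]$, and the bottom map is the quotient map.
\end{itemize}

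Finally I identify the vertical maps. $\Phi^K$ sends $\Sigma^\infty_+X$ to $\Sigma^\infty_+X^K$ with its $W_G(K)$-action. On $\pi_0$ the composite $A(G)\to A(W)\to A(W)^\wedge_I$ is therefore $[X]\mapsto[X^K]$, and this descends to the quotients. I expect the main obstacle to be the precise identification of $\pi_0\Sphere^{tW}$ and the check that both vertical maps are the fixed-point maps after completion, since this is where the Segal conjecture and the Tate identification from \cite{Separablepaper} must be combined carefully.
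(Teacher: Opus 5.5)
Your proposal is correct and follows essentially the same route as the paper. You use the smashing localization for the top corners, the Segal conjecture for the bottom corners, the isotropy separation sequence to get $\pi_0^G(\widetilde{E\cat F})=A(G)/\cat F$, and geometric fixed points of suspension spectra for the vertical maps. The only real difference is that you derive $\pi_0\Sphere^{tW}\cong A(W)^\wedge_I/[W]$ directly from the norm cofibre sequence instead of quoting the proof of \cite[Theorem 5.5]{Separablepaper}.

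One small slip: connectivity of $\Sphere^{tW}$ follows simply because it is the cofibre of a map between the connective spectra $\Sphere_{hW}$ and $\Sphere^{hW}$. It has nothing to do with the image of the norm on $\pi_0$ being the ideal generated by $[W]$; that fact computes $\pi_0$, it does not kill $\pi_{-1}$.
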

\begin{proof}
    Let us first identify the resulting square of $\mathbb{E}_\infty$-rings. Since $\Sp_G/\cat F$ is a smashing localization of $\Sp_G$ with idempotent algebra given by $\widetilde{E \cat F}$, there is a canonical equivalence 
    \[
    \Sp_G/\cat F \simeq \Mod_{\Sp_G}(\widetilde{E\cat F})
    \]
    see \cite[Proposition 4.8.2.10]{HA}. From this we immediately see that the endomorphism ring of the unit object in $(\Sp_G/\cat F)^\omega$ is given by $(\widetilde{E\cat F})^G$.
    The same argument applies to $(\Sp_G/\cat F \cup K)^\omega$ giving as a result $(\widetilde{E\cat F \cup K})^G$. For the $\mathbb{E}_\infty$-ring appearing in the bottom left corner, we use that mapping spaces commutes with limits to obtain $\Sphere^{hW_G(K)}$. Finally for the bottom right corner we use that the Weyl group action on $\Sphere$ is trivial together with \cite[Remark 4.8 and Example 4.14]{Separablepaper}. Note that  the top $\mathbb{E}_\infty$-rings are connective by the tom Dieck splitting, and the bottom ones are as a consequence of the Segal conjecture.

    We now argue that by applying $\pi_0$ we obtain the claimed square of commutative rings. In the proof of \cite[Theorem 5.5]{Separablepaper} we argued that $\pi_0(\Sphere^{hW_G(K)})=A(W_G(K))^\wedge_I$ and $\pi_0(\Sphere^{tW_G(K)})=A(W_G(K))^\wedge_I/[W_G(K)]$ and that the corresponding map is the quotient map $\mathrm{quot}$. The isotropy separation sequence
      \[
    E\cat F_+ \to \Sphere_G \to  \widetilde{E\cat F}
    \]
    gives rise to an exact sequence 
    \[
    \pi_0^G(E\cat F_+) \to A(G) \to \pi_0^G(\widetilde{E\cat F}) \to 0.
    \]
By \cite[Appendix A.1]{MNN2019} we can identify $E\cat F$ with the colimit of $G/H$ over the $\cat F$-orbit category $\OO_{\cat F}(G)$, and the map $E\cat F_+\to \Sphere_G$ is induced by the transfer maps $G/H_+\to \Sphere_G$. It follows from  \cite[V. Proposition 9.3]{LMS86} that the image of this transfer map in the Burnside ring $A(G)\cong \pi_0^G(\Sphere_G)$ is exactly the ideal generated (additively) by the classes of $G/H$ with $H\in\cat F$. Altogether, this shows that $\pi_0^G(\widetilde{E\cat F})=A(G)/\cat F$, and similarly for $\cat F \cup K$. Moreover the corresponding map $(\widetilde{E\cat F})^G\to (\widetilde{E\cat F \cup K})^G$ is induced by the containment $\cat F \subseteq \cat F \cup K$, and so has kernel generated by the orbit $G/K$. The effect on $\pi_0$ is precisely described by the map $q_K$. The vertical arrows are induced by the $K$-geometric fixed points and so the resulting effect on $\pi_0$ is just given by taking usual $K$-fixed points of $G$-sets, recalling that geometric fixed points of suspension spectra are the suspension spectra of categorical fixed points. 
\end{proof}

\begin{Rem}\label{rem-connected}
    For every finite group $G$, the category $\Perf(\Sphere)^{hG}$ is connected, i.e., $\pi_0(\unit)$ is indecomposable: By \cref{prop-pullback-unit}, we can identify $\pi_0(\unit)$ with the $I$-adically complete ring $\widehat{A(G)}_I$ which is a Henselian ring. By the lifting of idempotents for Henselian rings \cite[Tag 09XI]{stacks-project}, we can check the indecomposability of $\widehat{A(G)}_I$ after passing to the quotient $\widehat{A(G)}_I/I \simeq \Z$, from which the claim follows.
\end{Rem}

\section{The proof of algebraicity}
\label{sec:main}
In this section we prove that the Galois groupoid of $G$-spectra is algebraic. 

\begin{Prop}\label{prop-Galois-perfShG}
    For every finite group $G$, the Galois groupoid of $ \Perf(\Sphere)^{hG}$ is trivial.
\end{Prop}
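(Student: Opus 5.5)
Write $\cat C_G := \Ind(\Perf(\Sphere)^{hG})$. We must show that every finite cover in $\cat C_G$ splits. By \cref{rem-connected}, $\cat C_G$ is connected, so its Galois groupoid is a profinite group. It therefore suffices to show that every Galois extension (equivalently, every finite cover) of the unit is a finite product of copies of the unit.

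The approach is induction on $|G|$, combined with restriction along $e \subseteq G$. Restriction gives a symmetric monoidal functor $\Perf(\Sphere)^{hG} \to \Perf(\Sphere)$. On the other side, the Galois theory of $\Sp$ is trivial by Mathew. So for a finite cover $A$ of degree $n$, the underlying spectrum $\res^G_e A$ is split: $\res^G_e A \simeq \Sphere^{\times n}$ as a commutative algebra. The task is then to understand covers that become trivial on the underlying spectrum.

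Such a cover is a homotopy-fixed-point datum. Concretely, a $G$-action on $\Sphere^{\times n}$ through commutative algebra maps corresponds to a homomorphism $G \to \Sigma_n$, because the space of $\mathbb{E}_\infty$-automorphisms of $\prod_n \Sphere$ is the discrete group $\Sigma_n$. Hence any finite cover of $\cat C_G$ has the form $\Sphere^{X}$, the cotensor with a finite $G$-set $X$, viewed in $\Perf(\Sphere)^{hG}$. The remaining point is that $\Sphere^{X}$ is a cover of $\Perf(\Sphere)^{hG}$ only when the action is trivial. The candidate covers $\Sphere^{G/H}$ correspond under $\Perf(\Sphere)^{hG} \simeq \Fun(BG,\Perf(\Sphere))$ to coinduced objects. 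A nonsplit such cover would give a nontrivial idempotent of the unit $\pi_0$ after base change, or a nonconstant degree function. Equivalently, by \cref{prop-descendable} a nonzero cover is descendable.

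The main obstacle is that $\Sphere^{G/H}$ is not descendable in $\Perf(\Sphere)^{hG}$ when $H \neq G$. If it were, the unit would lie in the thick ideal generated by $\Ind_H^G$, forcing the Tate construction $\Sphere^{tG}$ to vanish. This contradicts the Segal conjecture: by \cref{prop-pullback-unit}, $\pi_0\Sphere^{tG} = A(G)^\wedge_I/[G] \neq 0$, and more precisely $\Sphere^{t(G\cap{}^gH')} \neq 0$ for the relevant stabilizers. To make this rigorous I would compute degrees. The degree of $\Sphere^{X}$ at the tt-prime of $\Perf(\Sphere)^{hG}$ detected by $\Phi^G$-type Tate localizations is $|X^G|$, whereas at the underlying prime it is $|X|$. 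The Balmer spectrum is connected and the degree function is constant, so $|X^G| = |X|$ and the action is trivial. I expect the degree comparison via a Tate-type geometric functor to be the delicate step, which is why the Segal conjecture (nonvanishing of $\Sphere^{tG}$ as a ring, with $\pi_0$ the completed Burnside quotient) enters. The $\mathbb{E}_\infty$-automorphism claim for $\Sphere^{\times n}$ is routine by comparison with $\mathbb{Z}^{\times n}$.
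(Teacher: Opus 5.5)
Your reduction agrees with the paper's. Covers restrict to split covers of $\Perf(\Sphere)$, so they have the form $\Sphere^X$ for a finite $G$-set $X$. Connectedness and \cref{prop-descendable} then reduce the claim to showing that $\Sphere^{G/H}$ is not descendable for $H\subsetneq G$. (The announced induction on $|G|$ is never used.)

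The gap is this last step. The Tate construction only kills objects induced from the trivial subgroup. For $Y\in\Perf(\Sphere)^{hG}$ one has $(\Ind_H^G\res_H Y)^{tG}\simeq(\res_H Y)^{tH}$, and in particular $(\Sphere^{G/H})^{tG}\simeq\Sphere^{tH}\neq 0$ whenever $H\neq e$. So the thick ideal generated by $\Sphere^{G/H}$ is not contained in the kernel of $(-)^{tG}$. Descendability of $\Sphere^{G/H}$ therefore does not force $\Sphere^{tG}=0$, and $\pi_0\Sphere^{tG}\neq 0$ yields no contradiction. Your argument only handles $H=e$.

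The degree version fails for the same reason. When $|G|$ is not a prime power, there is no tt-prime of $\Perf(\Sphere)^{hG}$ at which $\deg(\Sphere^X)=|X^G|$. Indeed $\Phi^G$ of the Borel-completed sphere $\underline{\Sphere}$ vanishes, because its derived defect base is the family of $p$-subgroups. For example, take $G=C_6$ and $H=C_2$. At primes lying over $C_2$ the degree is $|(C_6/C_2)^{C_2}|=3=|C_6/C_2|$, so no contradiction appears there; you have to look at $C_3$.

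The missing idea is to test at $p$-subgroups rather than at $G$. For a Sylow $p$-subgroup $P$, consider the symmetric monoidal composite
$\Perf(\Sphere)^{hG}\xrightarrow{\res}\Perf(\Sphere)^{hP}\hookrightarrow\Mod_{\underline{\Sphere}}(\Sp_P)^\omega\xrightarrow{\Phi^P}\Mod(\Phi^P\underline{\Sphere})$.
It sends $\Sphere^X$ to $\prod_{X^P}\Phi^P\underline{\Sphere}$. Because $P$ is a $p$-group, $\Phi^P\underline{\Sphere}\neq 0$; this is the genuinely nontrivial input. Descendability is preserved by the composite, so if $\Sphere^{G/H}$ were descendable then $(G/H)^P\neq\emptyset$, i.e.\ $P$ is subconjugate to $H$. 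Equivalently, constancy of the degree gives $|X^P|=|X|$. Applying this to a Sylow subgroup for every prime dividing $|G|$ forces $H=G$ by cardinality.

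This is exactly the paper's argument. It embeds $\Perf(\Sphere)^{hG}$ into $\Mod_{\underline{\Sphere}}(\Sp_G)^\omega$ and notes that descendability of $\Sphere^{G/H}$ would make the family generated by $H$ contain the derived defect base of $\underline{\Sphere}$, namely all $p$-subgroups for all primes $p$.
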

\begin{proof}
   By  \cite[Proposition 7.2]{Mathew2016} we have fully faithful functor 
   \[
   \CAlg^{\cov}(\Perf(\Sphere)^{hG}) \hookrightarrow \CAlg^{\cov}(\Perf(\Sphere))^{hG}=\Fin_G
   \]
   using that the Galois groupoid of the sphere is trivial by \cite[Example 6.19]{Mathew2016}. Therefore, any finite cover in $\Perf(\Sphere))^{hG}$ is of the form $\Sphere^X$ for some finite $G$-set $X$. 
   We noted in \cref{rem-connected} that this Galois theory is connected. Therefore by \cref{prop-descendable} the algebra $\Sphere^X$ is a finite cover if and only if it is descendable. By decomposing $X$, it now suffices to see that if $\Sphere^{G/H}$ is descendable, then $H=G$. 
    We verify this after applying the fully faithful symmetric monoidal functor (\cite[Corollary 6.2]{MNN}):
    \[
    \Perf(\Sphere)^{hG} \hookrightarrow \Mod_{\underline{\Sphere}}(\Sp_G)^\omega
    \]
    where $\underline{\Sphere}$ denotes the Borel completed sphere. Recall from \cite[Theorem 4.25]{MNN2019} that the derived defect base of the Borel completed sphere is all $p$-groups for all primes $p$. So, if $\Sphere^{G/H}$ is descendable, then the smallest family containing $H$ contains all $p$-subgroups of $G$ (for all primes $p$). In particular, every $p$-Sylow subgroup is sub-conjugate to $H$, and so by cardinality $G=H$. 
\end{proof}

\begin{Prop}\label{cor-pushout-galois}
 Let $\cat F$ be a family of subgroups of $G$ and let $K\subseteq G$ be a subgroup which is minimally not in $\cat F$. Then the square from \cref{prop-key-pullback} induces a pushout of profinite groupoids 
\[
\begin{tikzcd}
\pi_{\leq 1}(\Perf(\Sphere)^{tW_G(K)}) \arrow[r] \arrow[d] & \pi_{\leq 1}(\Sp_G/\cat F \cup K) \arrow[d] \\
* \arrow[r] & \pi_{\leq 1}(\Sp_G/\cat F).
\end{tikzcd}
\]
\end{Prop}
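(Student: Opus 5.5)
The plan is to pass from the pullback of $2$-rings in \cref{prop-key-pullback} to a pushout of Galois groupoids, using Mathew's descent results for Galois categories. By \cite[Theorem 7.? / Proposition 7.?]{Mathew2016}-type results (finite covers satisfy descent along pullbacks of $2$-rings), the functor $\CAlg^{\cov}(-)$ sends the pullback square of \cref{prop-key-pullback} to a pullback square of Galois categories. Concretely, I would argue as follows. For a pullback of $2$-rings $\cat R=\cat R_1\times_{\cat R_{12}}\cat R_2$, dualizable objects, commutative algebras and the finite-cover conditions should all be detected componentwise. Descendability of $A'$ is checked in the thick ideal, which is compatible with pullbacks because $\thick$ of a pair of objects glues. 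The splitting condition $A\otimes A'\simeq\prod A'[e_i^{-1}]$ requires more care: one should first pass to a common refinement, so I would invoke Mathew's result that $\CAlg^{\cov}$ takes such (finite) limits of stable $\infty$-categories to limits of Galois categories. Here \cref{rem-galois-dual} and \cref{lem-galois-fincov} let us work with $\Ind$ of the $2$-rings interchangeably.

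Granting this, we get a pullback of Galois categories
\[
\CAlg^{\cov}(\Sp_G/\cat F)\simeq \CAlg^{\cov}(\Sp_G/\cat F\cup K)\times_{\CAlg^{\cov}(\Perf(\Sphere)^{tW_G(K)})}\CAlg^{\cov}(\Perf(\Sphere)^{hW_G(K)}).
\]
By the Galois correspondence of \cref{rec-Galoiscorrespondence}, this is equivalent to a pushout square of profinite groupoids with $\pi_{\leq1}(\Perf(\Sphere)^{hW_G(K)})$ in the bottom left. By \cref{prop-Galois-perfShG}, that corner is trivial, i.e. equal to $\ast$. This yields the claimed pushout.

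The main obstacle is the first step: showing that finite covers glue along the pullback, especially descendability of the glued algebra. Here the Tate corner's Galois theory is unknown, so we cannot simply compute there. My first attempt is to use that $\Perf(\Sphere)^{hW_G(K)}\to\Perf(\Sphere)^{tW_G(K)}$ is essentially surjective up to thick closure, and that $\Perf(\Sphere)^{hW_G(K)}$ is connected by \cref{rem-connected}. Then a nonzero cover is automatically descendable by \cref{prop-descendable}, which reduces descent to the algebra gluing. That gluing follows from the fact that mapping spaces in a pullback of $\infty$-categories are pullbacks, together with $\CAlg$ commuting with limits. If needed, I would instead invoke Mathew's general statement that $\pi_{\leq1}$ converts finite limits in $\CAlg(\mathrm{Cat}^{\mathrm{perf}}_\infty)$ which are pullbacks along localizations into pushouts.
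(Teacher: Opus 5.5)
Your proposal is correct and is essentially the paper's proof. The paper cites \cite[Proposition 7.2]{Mathew2016}, that finite covers commute with all finite limits of $2$-rings (no localization hypothesis is needed), passes to a pushout via the Galois correspondence, and uses \cref{prop-Galois-perfShG} to identify the $\Perf(\Sphere)^{hW_G(K)}$ corner with $\ast$. Your speculative gluing argument in the last paragraph is unnecessary and would not suffice on its own: connectivity of the homotopy-fixed-point corner does not give a descendable splitting algebra in the pullback, and $\Sp_G/\cat F$ itself need not be connected, so simply cite Mathew's result.
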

\begin{proof}
    By \cite[Proposition 7.2]{Mathew2016} the formation of finite covers commutes with finite limits of $2$-rings. Therefore if we apply the functor of finite covers to the pullback in \cref{prop-key-pullback} we still obtain a pullback, and then using the Galois correspondence to pass to the Galois groupoids, we obtain a pushout. The indicated corner is trivial by \cref{prop-Galois-perfShG}.
\end{proof}

We now want to obtain a similar pushout for the algebraic Galois groupoids. To this end we recall the following important feature of the \'etale fundamental groupoid. 
\begin{Rem}\label{rem-henselian}
    For any henselian pair $(R,I)$, the reduction $R\to R/I$ induces an isomorphism on \'etale fundamental groupoids \cite[Lemma 09ZL]{stacks-project}. In particular, this applies to any $I$-adically complete Noetherian ring $R$.
\end{Rem}

\begin{Lem}\label{lem-alg-galois}
    Let $G$ be a finite group. Then 
    \[
    \pi_{\leq 1}^\alg(\Perf(\Sphere)^{hG})=*\quad \mathrm{and}\quad \pi_{\leq 1}^\alg(\Perf(\Sphere)^{tG})\simeq \pi_{\leq 1}^{\et}(\Z/|G|).
    \]
\end{Lem}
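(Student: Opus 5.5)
The plan is to identify $\pi_0(\unit)$ in both cases using \cref{prop-pullback-unit}, and then apply the henselian invariance of \cref{rem-henselian}. For $\Perf(\Sphere)^{hG}$, the unit has endomorphism ring $\Sphere^{hG}$, and as recalled in the proof of \cref{prop-pullback-unit} (via the Segal conjecture) we have $\pi_0(\Sphere^{hG})\cong A(G)^\wedge_I$. For the Tate construction, the endomorphism ring of the unit is $\Sphere^{tG}$, with $\pi_0(\Sphere^{tG})\cong A(G)^\wedge_I/[G]$. Here one takes $W_G(K)=G$ by specializing to $K=e$, or simply cites the same computation from \cite{Separablepaper}. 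Since $\pi_{\leq 1}^{\alg}$ of a $2$-ring is by definition $\pi_{\leq 1}^{\et}$ of this ring, it remains to compute the étale fundamental groupoids of these two commutative rings.

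For the first claim, $A(G)$ is a finitely generated free $\Z$-module and hence Noetherian, so $A(G)^\wedge_I$ is Noetherian and $I$-adically complete. Therefore $(A(G)^\wedge_I, I^\wedge)$ is a henselian pair. By \cref{rem-henselian}, $\pi_{\leq 1}^{\et}(A(G)^\wedge_I)\simeq\pi_{\leq 1}^{\et}(A(G)^\wedge_I/I)=\pi_{\leq 1}^{\et}(\Z)$. This is trivial by Minkowski's theorem, as recalled in the introduction.

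For the second claim, set $R=A(G)^\wedge_I/[G]$. It is a quotient of a Noetherian $I$-adically complete ring, so it is again $I$-adically complete. Hence $(R,IR)$ is henselian, since quotients of henselian pairs are henselian. \cref{rem-henselian} then gives $\pi_{\leq 1}^{\et}(R)\simeq\pi_{\leq 1}^{\et}(R/IR)$. Now
\[
R/IR = A(G)^\wedge_I/(I,[G]) = \Z/\epsilon([G]) = \Z/|G|,
\]
because $A(G)^\wedge_I/I\cong A(G)/I\cong\Z$ via the augmentation $\epsilon$, and $\epsilon([G])=|G|$. This yields $\pi_{\leq 1}^{\alg}(\Perf(\Sphere)^{tG})\simeq\pi_{\leq 1}^{\et}(\Z/|G|)$.

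The main obstacle is the correct identification of $\pi_0$ of the units. This is especially delicate for the Tate construction: one needs that $\pi_0(\Sphere^{tG})$ is exactly the quotient of $A(G)^\wedge_I$ by the principal ideal $([G])$. That identification rests on the Segal conjecture and the norm cofiber sequence, and it is imported from \cite[Theorem 5.5]{Separablepaper} as already used in \cref{prop-pullback-unit}. The only other point to check is that completeness passes to the quotient, which holds for finitely generated modules over Noetherian rings.
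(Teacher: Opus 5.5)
Your proof is correct. For the Tate claim it is essentially the paper's argument: identify $\pi_0(\Sphere^{tG})\cong A(G)^\wedge_I/[G]$ via \cref{prop-pullback-unit} (with $\cat F=\emptyset$, $K=e$), note that this quotient is still $I$-adically complete, and use \cref{rem-henselian} to reduce to $A(G)^\wedge_I/([G],I)=\Z/|G|$.

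For the claim $\pi_{\leq 1}^{\alg}(\Perf(\Sphere)^{hG})=*$ you take a different route. The paper computes nothing there. It deduces the claim formally from \cref{prop-Galois-perfShG}, which says that the full Galois groupoid $\pi_{\leq 1}(\Perf(\Sphere)^{hG})$ is already trivial, together with the surjection $\pi_{\leq 1}\twoheadrightarrow\pi_{\leq 1}^{\alg}$ of \cref{rec-algebraic}. You instead argue directly: $(A(G)^\wedge_I,I)$ is a complete Noetherian, hence henselian, pair with $A(G)^\wedge_I/I\cong\Z$, so all finite \'etale algebras split by Minkowski. This is the finite \'etale upgrade of the idempotent-lifting argument in \cref{rem-connected}, and it treats both claims uniformly.

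Your route also uses strictly fewer inputs. \cref{prop-Galois-perfShG} itself depends on \cref{rem-connected}, hence on the same Segal-conjecture identification of $\pi_0$. It additionally needs Mathew's comparison of finite covers with homotopy fixed points together with $\pi_{\leq 1}(\Sp)=*$, \cref{prop-descendable}, and the derived defect base of the Borel-complete sphere. In context the paper's shortcut is free, since \cref{prop-Galois-perfShG} is needed anyway for \cref{cor-pushout-galois}. Your version has the minor advantage of making the algebraic pushout of \cref{prop-alg-galois-pullback} independent of \cref{prop-Galois-perfShG}.
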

\begin{proof}
    The first claim follows from \cref{rec-algebraic} and the fact that the Galois groupoid is trivial by \cref{prop-Galois-perfShG}. For the second claim we use \cref{prop-pullback-unit} to identify the algebraic Galois groupoid with the \'etale fundamental groupoid of $A(G)^\wedge_I/[G]$. 
    Now note that the quotient ring $A(G)^\wedge_I/([G]))$ is a finitely generated module over the complete Noetherian ring $A(G)^\wedge_I$, hence $I$-adically complete. We are only left to apply \cref{rem-henselian} and observe that 
    \[
    A(G)^\wedge_I/([G],I)=\Z/|G|.
    \]
\end{proof}
\begin{Prop}\label{cor-pullback-on-perf}
     Let $\cat F$ be a family of subgroups of $G$ and let $K\subseteq G$ be a subgroup which is minimally not in $\cat F$. Passing to the endomorphism rings of the unit object in \cref{prop-key-pullback} and taking categories of perfect objects yields a pullback of $2$-rings
    \begin{equation}\label{secondsquare}
    \begin{tikzcd}
        \Perf((\widetilde{E\cat F})^G)\arrow[r]\arrow[d] & \Perf((\widetilde{E\cat F \cup K})^G)\arrow[d] \\
        \Perf(\Sphere^{hW_G(K)} )\arrow[r] & \Perf(\Sphere^{tW_G(K)}).
    \end{tikzcd}
    \end{equation}
    Moreover, for these categories the comparison map 
    $\pi_{\leq 1}\to\pi_{\leq 1}^\alg$ is an isomorphism.
\end{Prop}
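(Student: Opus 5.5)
The plan is to get the pullback square \eqref{secondsquare} from a Milnor-patching (gluing of perfect modules) statement for connective $\bbE_\infty$-rings, and to deduce algebraicity from Mathew's theorem on connective ring spectra.

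\textbf{The pullback.} By \cref{prop-pullback-unit}, the pullback square of $\bbE_\infty$-rings
\[
(\widetilde{E\cat F})^G \simeq (\widetilde{E\cat F \cup K})^G \times_{\Sphere^{tW_G(K)}} \Sphere^{hW_G(K)}
\]
consists of connective rings. On $\pi_0$, the bottom map $\Sphere^{hW_G(K)}\to\Sphere^{tW_G(K)}$ is the quotient map $\mathrm{quot}\colon A(W_G(K))^\wedge_I\to A(W_G(K))^\wedge_I/[W_G(K)]$, so it is surjective. These are exactly the hypotheses of Lurie's gluing theorem for perfect modules over a Milnor square of connective $\bbE_\infty$-rings, in which one of the two maps to the common target is surjective on $\pi_0$ (Spectral Algebraic Geometry, Theorem 16.2.0.2 and its perfect-module variant). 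I will invoke it to conclude that the base-change functors induce an equivalence
\[
\Perf((\widetilde{E\cat F})^G)\simeq \Perf((\widetilde{E\cat F \cup K})^G)\times_{\Perf(\Sphere^{tW_G(K)})}\Perf(\Sphere^{hW_G(K)}).
\]
This equivalence is symmetric monoidal, since base change is symmetric monoidal and limits in $\CAlg(\mathrm{Cat}^{\mathrm{perf}}_\infty)$ are computed underlying. Hence it is a pullback of $2$-rings.

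\textbf{Algebraicity.} By \cref{rem-galois-dual}, for $R$ one of the four rings in the square, the Galois groupoid of $\Perf(R)$ is that of $\Ind(\Perf(R))\simeq\Mod(R)$. By \cref{prop-pullback-unit}, $\pi_0(\End(\unit))=\pi_0(R)$ in each case. Mathew proves that for a connective $\bbE_\infty$-ring $R$ the comparison map $\pi_{\leq 1}(\Mod(R))\to\pi^{\et}_{\leq 1}(\pi_0R)$ is an equivalence [Mathew2016, Theorem 6.30]. The idea behind that theorem is that finite covers of a connective ring are étale, so they are determined by finite étale $\pi_0R$-algebras through the topological invariance of the étale site. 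Since all four rings are connective, the theorem applies to each of them.

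\textbf{Main obstacle.} I expect the only subtle point to be checking that the gluing theorem really yields an equivalence on perfect (and not only on connective or almost perfect) modules. I would first try Lurie's statement for perfect modules directly. If that form is not available, I would argue from the connective statement: perfect modules are dualizable, dualizability is detected in the fiber product, and the fiber product of stable categories is generated under finite colimits, shifts and retracts by connective perfect objects.
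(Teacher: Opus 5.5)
Your proposal is correct and matches the paper's argument: Lurie's Milnor-square gluing from SAG, applied because $\Sphere^{hW_G(K)}\to\Sphere^{tW_G(K)}$ is surjective on $\pi_0$ and upgraded from connective (bounded below) modules to perfect ones, followed by Mathew's algebraicity theorem for the four connective $\bbE_\infty$-rings of \cref{prop-pullback-unit}. One small correction: the paper cites the connective-ring result you need as \cite[Theorem 6.17]{Mathew2016}, not Theorem 6.30.
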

\begin{proof}
    We start by noting that the bottom horizontal map in \cref{prop-pullback-unit} induces a surjection on $\pi_0$. If we apply the functor $\Mod(-)$ to this latter square, we obtain a pullback of categories once we restrict to the subcategories of connective modules, see \cite[Proposition 16.2.2.1]{SAG}. In fact, the same proof holds more generally for bounded below objects, and so it applies to perfect objects as well (noting that being perfect in a pullback is detected vertex-wise), giving the first claim. We have already noted in \cref{prop-pullback-unit} that all the $\mathbb{E}_\infty$-rings in the square are connective so by \cite[Theorem 6.17]{Mathew2016} the indicated comparison maps are all isomorphisms. 
\end{proof}
We now want to prove the following result.

\begin{Prop}\label{prop-alg-galois-pullback}
Applying $\pi^{\alg}_{\leq 1}$ to the square \eqref{secondsquare} we obtain a pushout of profinite groupoids 
\[
\begin{tikzcd}
 \pi_{\leq 1}^{\alg}(\Perf(\Sphere)^{tW_G(K)})  \arrow[r] \arrow[d] & \pi_{\leq 1}^\alg(\Sp_G/\cat F \cup K) \arrow[d] \\
* \arrow[r] & \pi_{\leq 1}^\alg(\Sp_G/\cat F)
\end{tikzcd}
\]
In particular, the \'etale fundamental groupoid functor sends the square \eqref{importantsquare} to a pushout. 
\end{Prop}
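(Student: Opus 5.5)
The plan is to deduce this directly from \cref{cor-pullback-on-perf}, without any independent analysis of étale fundamental groupoids of Burnside rings. First, I identify the four corners. The square \eqref{secondsquare} consists of the $2$-rings of perfect modules over the endomorphism rings of the units of the corresponding corners of \cref{prop-key-pullback}. For a $2$-ring $\cat R$, $\pi^{\alg}_{\leq 1}(\cat R)$ depends only on $\pi_0(\End(\unit))$. By \cref{prop-pullback-unit}, the $\pi_0$ of these units is the square \eqref{importantsquare}. Hence applying $\pi^{\alg}_{\leq 1}$ to \eqref{secondsquare} gives the same as applying $\pi^{\alg}_{\leq 1}$ to the diagram of \cref{prop-key-pullback}, and both equal $\pi^{\et}_{\leq 1}$ applied to \eqref{importantsquare}. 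In particular, the corners are $\pi^{\alg}_{\leq 1}(\Sp_G/\cat F)$, $\pi^{\alg}_{\leq 1}(\Sp_G/\cat F\cup K)$, $\pi^{\alg}_{\leq 1}(\Perf(\Sphere)^{hW_G(K)})$ and $\pi^{\alg}_{\leq 1}(\Perf(\Sphere)^{tW_G(K)})$, using \cref{rem-galois-dual} to pass between the large and small categories. The lower left corner is trivial by \cref{lem-alg-galois}.

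Second, I show the square is a pushout by running the argument of \cref{cor-pushout-galois} on \eqref{secondsquare} instead of on the square of \cref{prop-key-pullback}. By \cref{cor-pullback-on-perf}, \eqref{secondsquare} is a pullback of $2$-rings. By \cite[Proposition 7.2]{Mathew2016}, finite covers commute with finite limits of $2$-rings, so applying $\CAlg^{\cov}$ yields a pullback of Galois categories. Under the Galois correspondence (\cref{rec-Galoiscorrespondence}) this becomes a pushout of Galois groupoids $\pi_{\leq 1}$ of the four perfect-module categories. Also by \cref{cor-pullback-on-perf}, for each of these four categories the comparison $\pi_{\leq 1}\to\pi^{\alg}_{\leq 1}$ is an isomorphism, naturally in the category by \cref{rec-algebraic}. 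So the pushout square of $\pi_{\leq 1}$'s is naturally isomorphic to the square of $\pi^{\alg}_{\leq 1}$'s, which is therefore a pushout. Combining this with the first step gives the displayed pushout.

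The final sentence of the proposition then follows: the square of $\pi^{\alg}_{\leq 1}$'s is literally $\pi^{\et}_{\leq 1}$ applied to \eqref{importantsquare}, by \cref{rec-algebraic} and \cref{prop-pullback-unit}.

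The main obstacle is the naturality bookkeeping in the first step. One must check that the maps in \eqref{secondsquare}, induced on endomorphism rings of units, recover exactly the maps $q_K$, $(-)^K$, $\widetilde{(-)}^K$ and $\mathrm{quot}$ on $\pi_0$. One must also check that the comparison maps form a natural transformation of squares, not just objectwise isomorphisms. The first point is already the content of \cref{prop-pullback-unit}. The second follows from the naturality of \eqref{alg-map} in $\cat C$, applied to the commuting square of $2$-rings. These checks are routine once written out.
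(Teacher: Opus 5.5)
Your proposal is correct and follows essentially the same route as the paper. The square \eqref{secondsquare} is a pullback of $2$-rings, and Mathew's Proposition 7.2 together with the Galois correspondence turns it into a pushout of Galois groupoids. By \cref{cor-pullback-on-perf}, the comparison $\pi_{\leq 1}\to\pi^{\alg}_{\leq 1}$ is a natural isomorphism on the four corners. The corners are then identified using that $\pi^{\alg}_{\leq 1}$ depends only on $\pi_0$ of the unit, together with \cref{lem-alg-galois}. Your explicit naturality bookkeeping just spells out what the paper leaves implicit; you also correctly cite \cref{cor-pullback-on-perf} where the paper's proof has a self-referential typo.
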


\begin{proof}
    By definition, applying $\pi_{\leq 1}^{\alg}$ to the square \eqref{secondsquare} is the same as applying the \'etale fundamental groupoid to \eqref{importantsquare}, see \cref{rec-algebraic}, so the second claim follows from the first one. Now note that by \cref{prop-alg-galois-pullback} there is no difference between $\pi_{\leq 1}^{\alg}$ and $\pi_{\leq 1}$ so we can instead argue using $\pi_{\leq 1}$. Recall that the Galois groupoid of Mathew sends pullbacks of $2$-rings to pushout of profinite groupoid by the Galois correspondence and \cite[Proposition 7.2]{Mathew2016}. Thus if we apply $\pi_{\leq 1}$ to the square in \cref{cor-pullback-on-perf} we obtain a pushout, and we can identify the terms using \cref{lem-alg-galois}.
\end{proof}

We now prove the induction step for our main result.

\begin{Prop}\label{prop-induction}
    Let $\cat F$ be a family of subgroups of $G$ and let $K\subseteq G$ be a subgroup which is minimally not in $\cat F$. Assume that the Galois groupoid of $\Sp_G/\cat F \cup K$ is algebraic. Then the Galois groupoid of $\Sp_G/\cat F$ is also algebraic.
\end{Prop}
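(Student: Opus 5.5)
We compare the two pushout squares of profinite groupoids from \cref{cor-pushout-galois} and \cref{prop-alg-galois-pullback}. By naturality of the comparison map \eqref{alg-map}, there is a map from the first square to the second.
\[
\begin{tikzcd}
\pi_{\leq 1}(\Perf(\Sphere)^{tW_G(K)}) \arrow[r] \arrow[d] & \pi_{\leq 1}(\Sp_G/\cat F \cup K) \arrow[d] \\
* \arrow[r] & \pi_{\leq 1}(\Sp_G/\cat F)
\end{tikzcd}
\quad\longrightarrow\quad
\begin{tikzcd}
\pi^{\alg}_{\leq 1}(\Perf(\Sphere)^{tW_G(K)}) \arrow[r] \arrow[d] & \pi^{\alg}_{\leq 1}(\Sp_G/\cat F \cup K) \arrow[d] \\
* \arrow[r] & \pi^{\alg}_{\leq 1}(\Sp_G/\cat F)
\end{tikzcd}
\]
Both are pushouts. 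The comparison on the bottom-left corners is trivially an isomorphism, and on the top-right corners it is an isomorphism by hypothesis. However, the comparison on the top-left corner, for the Tate category $\Perf(\Sphere)^{tW_G(K)}$, is only known to be surjective, and we do not know it is an isomorphism. So we cannot simply conclude that the bottom-right comparison map is an isomorphism.

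The key observation is that in a pushout
\[
\mathcal P = * \sqcup_{\mathcal T} \mathcal G
\]
only the image of $\mathcal T$ in $\mathcal G$ matters. Concretely, I would rewrite each square as a composite of two squares. The first composite is $\mathcal T \twoheadrightarrow \mathcal T^{\alg}$ followed by $\mathcal T^{\alg}\to \mathcal G$. For the Galois side, note that the map
\[
\pi_{\leq 1}(\Perf(\Sphere)^{tW_G(K)}) \to \pi_{\leq 1}(\Sp_G/\cat F\cup K)\simeq \pi^{\alg}_{\leq 1}(\Sp_G/\cat F\cup K)
\]
factors, by naturality of \eqref{alg-map}, through the surjection
\[
\pi_{\leq 1}(\Perf(\Sphere)^{tW_G(K)})\twoheadrightarrow \pi_{\leq 1}^{\alg}(\Perf(\Sphere)^{tW_G(K)}).
\]
Then I would use \cref{lem-profinite}: the square with that surjection on top and $*\to *$ on the bottom is a pushout. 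By pasting of pushouts, $\pi_{\leq 1}(\Sp_G/\cat F)$ is the pushout of $*\leftarrow \pi^{\alg}_{\leq 1}(\Perf(\Sphere)^{tW_G(K)})\to \pi_{\leq 1}(\Sp_G/\cat F\cup K)$. Using the hypothesis, this span is identified with the span defining $\pi^{\alg}_{\leq 1}(\Sp_G/\cat F)$ in \cref{prop-alg-galois-pullback}, so the comparison map is an isomorphism.

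The main obstacle is checking that these identifications are compatible. The pushout obtained must map to $\pi^{\alg}_{\leq 1}(\Sp_G/\cat F)$ via the canonical comparison map, not merely be abstractly isomorphic to it. I would handle this by working throughout with a single cube of squares coming from the natural transformation \eqref{alg-map}. The relevant maps of $2$-rings are the square \eqref{secondsquare} mapping to the square of \cref{prop-key-pullback} via the unit maps $\Perf(\pi_0\text{-endomorphisms})\to\cdots$, or equivalently via the fully faithful inclusions of \cref{rec-algebraic}. Naturality of these maps makes every face commute. Pasting the pushouts then yields the canonical comparison map as the induced map between pushouts, which is an isomorphism since it is induced by an isomorphism of spans.
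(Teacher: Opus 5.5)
Your proposal is correct and is essentially the paper's argument. The paper also builds the cube from the natural comparison map, uses \cref{lem-profinite} to see that the left face (the surjection $\pi_{\leq 1}(\Perf(\Sphere)^{tW_G(K)})\twoheadrightarrow \pi^{\alg}_{\leq 1}(\Perf(\Sphere)^{tW_G(K)})$ over $*\to *$) is a pushout, and concludes by pasting. The one organizational difference is that the paper pastes left face plus back face against front face plus right face, so the right face is a pushout, and only then uses the hypothesis (an isomorphism on top forces one on the bottom); this avoids building your intermediate factorization and checking its compatibility by hand.
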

\begin{proof}
    The core of the proof is the following cube:
    \[\resizebox{\columnwidth}{!}{$\displaystyle
\begin{tikzcd}[ampersand replacement=\&]
	\& {\pi_{\leq 1}^{\alg}(\Perf(\Sphere)^{tW_G(K)}) } \&\&\& {\pi_{\leq 1}^{\alg}(\Sp_G/\cat F \cup K)} \\
	{\pi_{\leq 1}(\Perf(\Sphere)^{tW_G(K)}) } \&\& {\pi_{\leq 1}(\Sp_G/\cat F \cup K)} \\
	\\
	\& * \&\&\& {\pi_{\leq 1}^{\alg}(\Sp_G/\cat F)} \\
	* \&\& {\pi_{\leq 1}(\Sp_G/\cat F)}
	\arrow[from=1-2, to=1-5]
	\arrow[from=1-2, to=4-2]
	\arrow[from=1-5, to=4-5]
	\arrow[two heads, from=2-1, to=1-2]
	\arrow[from=2-1, to=2-3]
	\arrow[from=2-1, to=5-1]
	\arrow[two heads, from=2-3, to=1-5]
	\arrow[from=2-3, to=5-3]
	\arrow[from=4-2, to=4-5]
	\arrow[two heads, from=5-1, to=4-2]
	\arrow[from=5-1, to=5-3]
	\arrow[two heads, from=5-3, to=4-5]
\end{tikzcd}
$}
\]
The front square is the pushout from \cref{cor-pushout-galois} and the back square is the pushout from \cref{prop-alg-galois-pullback}. The maps from the front to the back square are the canonical maps which relate the Galois groupoid to the algebraic one, so we know they are surjective and the cube commutes. By \cref{lem-profinite} the left square is a pushout. Using the pasting lemma for pushouts \cite[Proposition
4.4.2.1]{HTT} one deduces that the right most square is a pushout. Since we assumed that the top horizontal map of this square is an isomorphism, we deduce that also the bottom horizontal map is an isomorphism, concluding the proof. 
\end{proof}
\begin{Thm}
     Let $\cat F$ be a family of subgroups of $G$ . Then the Galois groupoid of $\Sp_G/\cat F$ is algebraic and so isomorphic to $\pi_{\leq 1}^{\et}(A(G)/\cat F)$.
\end{Thm}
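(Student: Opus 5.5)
We argue by descending induction on the family $\cat F$, using \cref{prop-induction} for the inductive step. Since $G$ is finite, there are only finitely many families of subgroups of $G$. Any family $\cat F$ can be enlarged to the family of all subgroups by adding conjugacy classes one at a time: if $\cat F$ is not everything, pick $K\notin\cat F$ of minimal order. Then every proper subgroup of $K$ lies in $\cat F$, so $K$ is minimally not in $\cat F$ in the sense of \cref{ter-minimally-not-F}, and $\cat F\cup K$ is again a family that is strictly larger. We therefore induct on the number of conjugacy classes of subgroups not contained in $\cat F$.

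For the base case, take $\cat F$ to be the family of all subgroups. Then $G/G_+=\Sphere_G\in \cat F$ is killed, so $\Sp_G/\cat F=0$ is the zero category. Its Galois category of finite covers is trivial: $\CAlg^{\cov}(0)$ consists only of the zero algebra, whose opposite category is the empty Galois category, corresponding to the empty groupoid. Likewise $A(G)/\cat F=0$, since $[G/G]=1$ lies in the ideal, and $\Spec(0)=\emptyset$ has empty étale fundamental groupoid. So the comparison map is trivially an equivalence. This is the only point needing care: the convention that the zero category has empty Galois groupoid must match the convention for the zero ring. Both sides are the initial, empty, profinite groupoid, and the map \eqref{alg-map} between them is the identity.

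For the inductive step, let $\cat F$ be a proper family and choose $K$ minimally not in $\cat F$ as above. By induction the Galois groupoid of $\Sp_G/\cat F\cup K$ is algebraic, and \cref{prop-induction} then gives the same for $\Sp_G/\cat F$. Finally, to identify the groupoid, recall from \cref{rec-algebraic} that $\pi^{\alg}_{\leq 1}(\Sp_G/\cat F)=\pi_{\leq 1}^{\et}(\pi_0\End(\unit))$. By the proof of \cref{prop-pullback-unit}, $\pi_0\End(\unit)=\pi_0^G(\widetilde{E\cat F})=A(G)/\cat F$, which yields $\pi_{\leq 1}(\Sp_G/\cat F)\simeq \pi^{\et}_{\leq 1}(A(G)/\cat F)$. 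Taking $\cat F=\emptyset$ recovers the case of $\Sp_G$ itself with $A(G)$.
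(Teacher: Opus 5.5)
Your proposal is correct and follows the paper's proof essentially step for step. Like the paper, you run a descending induction along families that grow by one conjugacy class at a time, with base case $\Sp_G/\mathrm{All}=0$, the inductive step supplied by \cref{prop-induction}, and the identification $\pi_0\End(\unit)\cong A(G)/\cat F$ taken from \cref{prop-pullback-unit}. One wording slip: $\CAlg^{\cov}(0)\op$ is not an empty category but the one-object category $\Fun(\emptyset,\Fin)$, though it does correspond to the empty groupoid as you say.
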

\begin{proof}
     Choose an exhaustive filtration of families of subgroups of $G$:
    \[
    \cat F=\cat F_0 \subseteq \cat F_1 \subseteq \ldots \subseteq \cat F_{i-1} \subseteq \cat F_{i}\subseteq \ldots \subseteq \cat F_n=\mathrm{All}
    \]
   with each $\cat F_{i}\backslash \cat F_{i-1}= \{(K_i)\}$ consisting of a single conjugacy class. We prove by descending induction on $i\leq n$ that the Galois groupoid of $\Sp_G/\cat F_i$ is algebraic. The claim trivially holds for $\cat F_n$ since in this case $\Sp_G/\cat F_n =0$. The induction step holds by \cref{prop-induction} so we can conclude. We can finally identify the Galois group with the \'etale fundamental groupoid of $A(G)/\cat F$ using \cref{prop-pullback-unit}.
\end{proof}

The special case $\cat F=\emptyset$ of this result reads:

\begin{Cor}\label{cor-algebraic}
    For every finite group $G$, the Galois groupoid of $\Sp_G$ is isomorphic to the \'etale fundamental groupoid of the Burnside ring of $G$:
    \[ \pi_{\leq 1}(\Sp_G)\simeq \pi_{\leq 1}^{\et}(A(G)).\]
\end{Cor}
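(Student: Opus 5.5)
The corollary is the special case $\cat F=\emptyset$ of the preceding theorem, so the plan is to specialize that result and identify both sides. For the empty family, no orbits $G/H_+$ are annihilated. The smashing localization $L_{\emptyset}$ therefore has idempotent algebra $\widetilde{E\emptyset}=S^0$, which is the unit $\Sphere_G$ (since $E\emptyset=\emptyset$, so $E\emptyset_+$ is the zero spectrum). Hence $\Sp_G/\emptyset\simeq \Mod_{\Sp_G}(\Sphere_G)\simeq \Sp_G$ as presentably symmetric monoidal $\infty$-categories. The theorem then says that the Galois groupoid of $\Sp_G$ is algebraic, i.e. the comparison map \eqref{alg-map}
\[
\pi_{\leq 1}(\Sp_G)\twoheadrightarrow \pi_{\leq 1}^{\alg}(\Sp_G)=\pi_{\leq 1}^{\et}(\pi_0\End_{\Sp_G}(\unit))
\]
is an equivalence.

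It remains to identify $\pi_0\End_{\Sp_G}(\unit)$ with $A(G)$. This is Segal's theorem, as recalled in the introduction. It is also the case $\cat F=\emptyset$ of \cref{prop-pullback-unit}: there $\pi_0^G(\widetilde{E\cat F})=A(G)/\cat F$, and the ideal generated by $[G/H]$ for $H\in\emptyset$ is zero, so $A(G)/\emptyset=A(G)$. Composing gives $\pi_{\leq 1}(\Sp_G)\simeq\pi_{\leq 1}^{\et}(A(G))$, naturally via the comparison map.

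The only point that needs care is the degenerate indexing in the theorem's proof. We take $\cat F_0=\emptyset$ and filter up to $\mathrm{All}$, adding one conjugacy class at a time. The first step adds the trivial subgroup, which is indeed minimally not in $\emptyset$, so \cref{prop-key-pullback} and \cref{prop-induction} apply to every step, including $\cat F=\emptyset$ itself. Beyond checking this, no real obstacle arises.
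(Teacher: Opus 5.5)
Your proposal is correct and takes the same approach as the paper: the corollary is the preceding theorem specialized to $\cat F=\emptyset$, using $\Sp_G/\emptyset=\Sp_G$ and $A(G)/\emptyset=A(G)$. Your extra check that the first step of the filtration adds the trivial subgroup, which is minimally not in $\emptyset$, is a sanity check the paper leaves implicit.
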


For the computations in the next section, we assemble the following.

\begin{Prop}\label{prop:explicit_po}
Let $G$ be a finite group, $\cat F$ a family of subgroups and $K\subseteq G$ a subgroup minimally not in $\cat F$. Then the square from \cref{prop-pullback-unit} can be expanded as follows:
\[
    \begin{tikzcd}[column sep=2cm]
        A(G)/\cat F\arrow[r,"q_K"]\arrow[ddd, bend right=90,"a"']\arrow[d,"(-)^K"'] & A(G)/(\cat F \cup K)\arrow[d,"\widetilde{(-)}^K"] \arrow[ddd, bend left =90,"b"]\\
        A(W_G(K))^\wedge_{I}\arrow[r,"\mathrm{quot}"]\arrow[d,"\pi_I"'] & A(W_G(K))^\wedge_I/[W_G(K)]\arrow[d,"\pi_I"]\\
        A(W_G(K))^\wedge_{I}/I \arrow[d, "\sim"]\arrow[r,"\mathrm{quot}/I"]& A(W_G(K))^\wedge_I/([W_G(K)],I)\arrow[d, "\sim"] \\
        \Z \arrow[r,"\mathrm{proj}"] & \Z/|W_G(K)|.
    \end{tikzcd}
    \]
where: 
\begin{itemize}
\item The map $\pi_I$ denotes the quotient map by the augmentation ideal $I$;
\item The map $a$ sends $[X]$ to $|X^K|$, and similarly for $b$.
\end{itemize}
Furthermore, the outer square induces a pushout of \'etale fundamental groupoids, and we have factorizations of $\pi^{\et}_{\leq 1}(a)$ and $\pi^{\et}_{\leq 1}(b)$ as follows:
\[
 \begin{tikzcd}
       \pi_{\leq 1}^{\et}(b) \colon\pi_{\leq 1}^{\et}(\mathbb Z/|W_G(K)|) \arrow[r]\arrow[d] & \pi_0(\pi_{\leq 1}^{\et}(\mathbb Z/|W_G(K)|))\arrow[r]\arrow[d] & \pi_{\leq 1}^{\et}(A(G)/\cat F\cup K)\arrow[d] \\
         \pi_{\leq 1}^{\et}(a) \colon\pi_{\leq 1}^{\et}(\mathbb Z) \arrow[r] & \pi_0(\pi_{\leq 1}^{\et}(\mathbb Z)) \arrow[r] & \pi_{\leq 1}^{\et}(A(G)/\cat F).
    \end{tikzcd}
\]
\end{Prop}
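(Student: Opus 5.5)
The argument has three parts: build the diagram, transport the pushout of \cref{prop-alg-galois-pullback} down the vertical maps, and read off the factorizations.

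\textbf{The diagram.} The top square is \eqref{importantsquare} from \cref{prop-pullback-unit}. The middle square commutes because $\mathrm{quot}$ reduces modulo $[W_G(K)]$ and $\pi_I$ reduces modulo $I$, so both composites are reduction modulo $([W_G(K)],I)$. For the bottom square, use that the augmentation identifies $A(W)^\wedge_I/I\cong A(W)/I\cong\Z$. Under this identification $[W_G(K)]$ goes to $|W_G(K)|$, so the right-hand corner becomes $\Z/|W_G(K)|$ and the bottom map becomes the projection. To identify $a$, trace $[X]$ through $[X^K]\in A(W_G(K))$ and then apply the augmentation. This gives the cardinality $|X^K|$. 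The same computation gives $b$, using $q_K$-compatibility.

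\textbf{The outer pushout.} By \cref{rem-henselian} (as in the proof of \cref{lem-alg-galois}), both vertical maps $\pi_I$ in the middle row induce isomorphisms on $\pi^{\et}_{\leq 1}$. The rings $A(W)^\wedge_I$ and $A(W)^\wedge_I/[W]$ are $I$-adically complete Noetherian, so this applies. Composing with the bottom isomorphisms, the outer square becomes isomorphic, on \'etale fundamental groupoids, to the square \eqref{importantsquare}. That square is a pushout by \cref{prop-alg-galois-pullback}, so the outer square is a pushout too.

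\textbf{The factorizations.} The map $\pi^{\et}_{\leq 1}(a)$ has source $\pi^{\et}_{\leq 1}(\Z)$. This groupoid is trivial, hence equal to its $\pi_0$, a point, so the factorization through $\pi_0$ is tautological.

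For $b$, we need $\pi^{\et}_{\leq 1}(\Z/|W_G(K)|)\to\pi^{\et}_{\leq 1}(A(G)/\cat F\cup K)$ to factor through its set of components. This is the step I expect to be the main subtlety. The plan is to use the commutative outer square: the composite $\Z/n\leftarrow\Z\leftarrow A(G)/\cat F$ agrees with $\Z/n\leftarrow A(G)/\cat F\cup K\leftarrow A(G)/\cat F$. However, that only shows the map is killed after pushing to $\pi^{\et}_{\leq 1}(A(G)/\cat F)$, not in $\pi^{\et}_{\leq 1}(A(G)/\cat F\cup K)$ itself.

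Instead I would argue directly on rings. The map $b\colon A(G)/(\cat F\cup K)\to\Z/n$ factors through the mark homomorphism $A(G)\to\Z$, $[X]\mapsto|X^K|$, followed by reduction. So $b$ factors as $A(G)/(\cat F\cup K)\to\Z/n$ through the image of $\Z$, namely $\Z/m\to\Z/n$ with $m\mid n$.

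I would then check that this image ring has discrete \'etale fundamental groupoid. Here $\pi^{\et}_{\leq 1}(\Z/m)=\bigsqcup_{p\mid m}B\widehat{\Z}$, since each factor is a henselian local ring with residue field $\Fp$. That is not discrete, so this route fails as stated.

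A better route is to note that $b$ factors through the ring map $\Z\to\Z/n$. Concretely, $A(G)\to\Z\to\Z/n$, so on spectra $\Spec(\Z/n)\to\Spec(\Z)\to\Spec(A(G))$. This image lands in $\Spec(A(G)/\cat F\cup K)$ only after reduction, and the factorization $\Spec(\Z/n)\to V\to\Spec(A(G)/\cat F\cup K)$ should be taken through the closed subscheme $V$ cut out there. I would show that each connected component of $\Spec(\Z/n)$, namely $\Spec(\Z/p^k)$, maps into $\Spec(A(G)/\cat F\cup K)$ through a point whose residue field is $\Fp$, via a map factoring through $\Fp\to\Z/p^k$ up to henselization. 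It would then suffice that $\pi^{\et}_1$ of $\Fp$, which is $\widehat{\Z}$, maps trivially. That needs $\Spec(\bar\Fp)$ to lift compatibly, and it does: $\Z/p^k\to\bar\Fp$ exists and the Frobenius loop is killed because it extends over the strict henselization.

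I expect this identification of the Frobenius loops as trivial in the target to be the real work.
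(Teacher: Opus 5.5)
Your construction of the diagram, your identification of $a$ and $b$, and your proof that the outer square is a pushout (henselian invariance along the two maps $\pi_I$, then \cref{prop-alg-galois-pullback}) agree with the paper. The factorization for $a$ is indeed tautological.

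\textbf{The gap is in the factorization for $b$.} This is the only real content of the last claim, and your final step does not work.

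\emph{The henselization argument is invalid.} You assert that the Frobenius loop of $\pi^{\et}_{\leq 1}(\mathbb F_p)$ dies in $\pi^{\et}_{\leq 1}(A(G)/\cat F\cup K)$ ``because it extends over the strict henselization''. But only $\Spec(\overline{\mathbb F}_p)$ factors through the strict henselization. The point $\Spec(\mathbb F_p)$ factors only through the henselization, whose fundamental group is $\mathrm{Gal}(\overline{\mathbb F}_p/\mathbb F_p)=\widehat{\Z}$. If your argument were valid, every morphism $\Spec(\mathbb F_p)\to X$ would be trivial on $\pi_1$. That already fails for $X=\Spec(\Z/|W_G(K)|)$ itself, which is the source of the loops here. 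A symptom is that your argument never uses $\pi^{\et}_{\leq 1}(\Z)=*$, and that input is essential.

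\emph{The factorization through $\Spec(A(G))$ does not help either.} Factoring through $\Spec(\Z)\to\Spec(A(G))$ via the mark at $K$ only kills the loops in $\pi^{\et}_{\leq 1}(A(G)/\cat F)$. That is exactly the problem you had already diagnosed: the $K$-mark does not descend to $A(G)/(\cat F\cup K)\to\Z$, because it sends $[G/K]$ to $|W_G(K)|\neq 0$.

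\textbf{The missing idea.} For each prime $p$ dividing $|W_G(K)|$, show that the composite $b'\colon A(G)/(\cat F\cup K)\to\Z/|W_G(K)|\to\mathbb F_p$ factors through a ring map $A(G)/(\cat F\cup K)\to\Z$. Then the Frobenius loop factors through $\pi^{\et}_{\leq 1}(\Z)=*$ by Minkowski's theorem. Combined with $\pi^{\et}_{\leq 1}(\Z/|W_G(K)|)\simeq\bigsqcup_{p}\pi^{\et}_{\leq 1}(\mathbb F_p)$, this gives the factorization through $\pi_0$.

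The paper produces this factorization in two ways.
\begin{enumerate}
\item \emph{Directly.} Choose $K\subsetneq\tilde H\subseteq N_G(K)$ with $\tilde H/K$ a nontrivial $p$-subgroup of $W_G(K)$.
\begin{itemize}
\item Since $X^{\tilde H}=(X^K)^{\tilde H/K}$, we get $|X^K|\equiv|X^{\tilde H}|\pmod p$.
\item The mark $|(-)^{\tilde H}|$ vanishes on $[G/K]$ because $|\tilde H|>|K|$.
\item It vanishes on $[G/L]$ for $L\in\cat F$, because $\tilde H$ contains $K$ and $\cat F$ is closed under subconjugation.
\end{itemize}
Hence $|(-)^{\tilde H}|$ descends to $A(G)/(\cat F\cup K)\to\Z$, and it reduces to $b'$ modulo $p$. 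This is the paper's second proof, transported along $[X]\mapsto[X^K]$.
\item \emph{Via the marks.} Lift the $\mathbb F_p$-point along the mark homomorphism $A(G)/(\cat F\cup K)\to\prod_{(H)\notin\cat F\cup K}\Z$, which is surjective on spectra. The lift is a point of some copy of $\Spec(\Z)$, and its residue field at $p$ is $\mathbb F_p$.
\end{enumerate}

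Either way, the content is global. Every $\mathbb F_p$-point along which $\Spec(\Z/|W_G(K)|)$ is attached already lies on a copy of $\Spec(\Z)$ inside $\Spec(A(G)/\cat F\cup K)$. No argument local at that point can replace this.
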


\begin{proof}
The first part of the proposition is just an expansion of the square \eqref{importantsquare}. Moreover, the second claim in \cref{prop-alg-galois-pullback} tell us that the top square is sent to a pushout by the \'etale fundamental groupoid, and \cref{rem-henselian} tells us that the reduction map $\pi_I$ induces an isomorphism between \'etale fundamental groupoids. All together we find that the outer square is sent to a pushout by the \'etale fundamental groupoid functor. 
Regarding the factorization, we recall that $\pi^{\et}_{\leq 1}(\mathbb Z)=*$ by Minkowski's theorem, so we only need to argue that $\pi_{\leq 1}^{\et}(b)$ factors as claimed.

We provide two proofs for this:
\begin{enumerate}
    \item[(1)] We first observe that 
\[ \prod_{0<p | |W_G(K)|}\pi^{\et}_{\leq 1}(\mathbb F_p)\stackrel{\simeq}{\longrightarrow} \pi^{\et}_{\leq 1}(\mathbb Z/|W_G(K)|)\]
is an isomorphism to reduce to factoring any given composition

\[
 \begin{tikzcd}
      b' \colon A(G)/\cat F\cup K \arrow[r, "b"] & \mathbb Z/|W_G(K)| \arrow[r] & \mathbb F_p
      \end{tikzcd}
\]
through a ring with trivial $\pi^{\et}_{\leq 1}$.
Since the marks homomorphism is a faithfully flat ring map
\[ A(G)/\cat F\cup K\longrightarrow \prod_{(H)\notin\cat F\cup K}\mathbb Z,\]
the $\mathbb F_p$-point $b'$ lifts to some $\overline{\mathbb F}_p$-valued point of some factor $\mathbb Z$, and since every $\overline{\mathbb F}_p$-valued point of $\mathbb Z$ is $\mathbb F_p$-rational and $\pi^{\et}_{\leq 1}(\mathbb Z)=*$, we conclude.
\item[(2)] For the second proof, we use that the map $b\colon A(G)/\cat F \cup K \to \mathbb Z/|W_G(K)|$ factors through the augmentation map $A(W_G(K))/\{e\} \to \mathbb Z/|W_G(K)|$ so up to renaming we may instead consider the augmentation map $A(G)/[G] \to \mathbb Z/|G|$. Given the profinite fundamental groupoid of the latter recalled above, it suffices to prove the same result for the composite $A(G)/[G]\to \mathbb Z/|G|\to \mathbb Z/p$ for every prime $p$ dividing $|G|$. Finally for this, we let $H$ be a nontrivial $p$-subgroup of $G$. For every finite $H$-set $X$, and in particular for (restrictions of) $G$-sets, we have $|X| = |X^H|$ modulo $p$, and $G^H = \emptyset$, so that the composite $A(G)/[G] \to \mathbb Z/p$ factors through the map $|(-)^H| : A(G)/[G ]\to \mathbb Z$, and $\mathbb Z$ has no étale $\pi_1$, again by Minkowski's theorem, which proves the claim. 

\end{enumerate}
\end{proof}

To guide the reader, we remark that keeping track of exactly which component $b'$ factors through in the above proof will be the most laborious part of the algorithm computing $\pi_{\leq 1}^{\et}(A(G))$. 
\section{The \'{e}tale fundamental groupoid of the Burnside ring}
\label{sec:burnside}

The result of the previous section motivates us to give an explicit description of the \'etale fundamental groupoid of the Burnside ring $A(G)$ for every finite group $G$. While this could have been done sixty years ago, we are unaware of any prior accounts, so we take this opportunity to work this out. To simplify notation, we write 
\[
\mathfrak F(n):=B\widehat{F(n)}
\]
for the groupoid associated to the profinite completion of the free group $F(n)$ on $n\ge 0$ generators. Our first result then states that $\pi^{\et}_{\leq 1}(A(G)/\cat F)$ is the profinite completion of a disjoint union of bouquets of circles.

\begin{Thm}\label{thm:structure}
Let $G$ be a finite group and $\cat F$ a proper family of subgroups of $G$. Then 
\[ \pi^{\et}_{\leq 1}(A(G)/\cat F)\simeq \bigsqcup_{i=1}^{n(\cat F)}\mathfrak F(\alpha(\cat F,i))\]
for uniquely determined integers $n(\cat F)\ge 1$ and $\alpha(\cat F,i)\ge 0$.
\end{Thm}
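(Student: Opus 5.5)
We argue by descending induction on the family $\cat F$, along an exhaustive filtration $\cat F=\cat F_0\subseteq\cat F_1\subseteq\dots\subseteq\cat F_n=\mathrm{All}$ in which each step adds a single conjugacy class $(K_i)$, exactly as in the proof of algebraicity. We prove existence of the decomposition; uniqueness is separate and formal.

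\emph{Base case.} The last proper family is $\cat F_{n-1}=\mathrm{All}\setminus\{(G)\}$, for which $A(G)/\cat F_{n-1}\cong\Z$ via $[X]\mapsto|X^G|$. Hence $\pi^{\et}_{\leq 1}=*=\mathfrak F(0)$, as $\pi_{\le 1}^{\et}(\Z)=*$. (More uniformly, one may start from the empty groupoid $\pi_{\le1}^{\et}(0)$ for $\cat F_n=\mathrm{All}$.)

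\emph{Inductive step.} Suppose $\pi^{\et}_{\leq 1}(A(G)/(\cat F\cup K))\simeq\bigsqcup_j\mathfrak F(\beta_j)$. By \cref{prop:explicit_po}, $\pi^{\et}_{\leq1}(A(G)/\cat F)$ is the pushout of
\[
* \longleftarrow \pi^{\et}_{\leq1}(\Z/|W_G(K)|)\longrightarrow \bigsqcup_j\mathfrak F(\beta_j).
\]
Moreover, the right-hand map factors through $\pi_0$, which is the finite discrete set $S$ of primes dividing $|W_G(K)|$. Since $\pi_{\le1}^{\et}$ of the left corner is connected and simply connected, the pushout can be computed as the pushout of $*\leftarrow S\to\bigsqcup_j\mathfrak F(\beta_j)$, where $S$ is viewed as a discrete profinite groupoid. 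To justify replacing $\pi^{\et}_{\le1}(\Z/|W_G(K)|)$ by $S$, we use that $\pi^{\et}_{\le1}(\Z/|W_G(K)|)\to S$ is an epimorphism with connected fibres, so that the pushout along $*$ is unaffected. Concretely, a Galois-category pullback against $\Fin$ forces the descent datum to become trivial on each component; alternatively, one can apply \cref{lem-profinite}-style pasting.

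The pushout of $*\leftarrow S\to\bigsqcup_j\mathfrak F(\beta_j)$ is then computed by a profinite van Kampen argument, viewing the groupoids as profinite completions of discrete graph-of-groups data. Gluing all points of $S$ to a single new point identifies $m=|S|$ basepoints. Let $c$ be the number of distinct components $\mathfrak F(\beta_j)$ hit by $S$, and write $J$ for the set of these components. The result merges those components into one component with fundamental group the free profinite product of the $\widehat{F(\beta_j)}$ for $j\in J$ with $\widehat{F(m-c)}$; the extra free generators come from the loops in the graph obtained by gluing. The other components are unchanged. Since a free profinite product of free profinite groups on finitely many generators is free profinite, this new component is $\mathfrak F(\sum_{j\in J}\beta_j+m-c)$. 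The number of components stays $\ge1$: the image of $*$ is always nonempty, and at the base case there is at least one component.

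\emph{Main obstacle.} The hard part is to justify that pushouts of profinite groupoids along discrete sets behave like the discrete van Kampen computation followed by profinite completion. Concretely, this means showing that the Galois category of the pushout, namely the pullback of the categories $\Fun(-,\Fin)$, is $\Fun$ of the profinite completion of the discrete pushout groupoid, because finite sets with the given gluing data are exactly finite representations of the discrete pushout. Once that is established, free profinite products are identified with completions of free products. Uniqueness then follows: $n(\cat F)=|\pi_0|$, and each $\alpha$ is determined as the minimal number of topological generators of the component group, or via its abelianization $\widehat{\Z}^{\alpha}$.
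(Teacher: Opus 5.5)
Your proposal is correct and follows essentially the same route as the paper. The shared steps are: descending induction starting from $A(G)/\mathcal P\cong\Z$; the pushout of \cref{prop:explicit_po} together with its factorization through $\pi_0$; replacing $\pi^{\et}_{\leq 1}(\Z/|W_G(K)|)$ by the discrete set of primes dividing $|W_G(K)|$, which is the paper's ``left square is a pushout'' and is justified by connected fibres as in \cref{lem-profinite}; and then gluing finitely many points, i.e.\ attaching $1$-cells. Your explicit count $\sum_{j\in J}\beta_j+m-c$ and your Galois-category argument for the profinite van Kampen step only spell out what the paper does in \cref{prop:euler_add} and through the colimit-preserving functors $(-)^\wedge_\pi$ and $\tau_{\leq 1}$.
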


\begin{Rem}
   The integer $n(\cat F)$ equals the number of connected components of $\Spec(A(G)/\cat F)$.  Indeed, the connected components of the \'etale fundamental groupoid agree with the poset of subobjects of the terminal object of the \'etale topos of $A(G)/\cat F$ by \cite[Proposition 5.45]{Mathew2016}. Since the \'etale topology is subcanonical, this agrees with the subobjects of $A(G)/\cat F$ in the Zariski topos, which in turn agree with the set of connected components of the Zariski spectrum.
\end{Rem}
If the group $G$ is solvable, then we know by a result of Dress \cite{Dress} that $\Spec(A(G))$ is connected. In this case we can give the following closed formula for the number of generators of the \'etale fundamental groupoid. 

\begin{Thm}\label{thm:solvable}
If $G$ is solvable, then $\pi_{\leq 1}^{\et}(A(G))$ is the free profinite group on a number of generators given by
\[ 
\sum\limits_{(K)\subsetneq (G)}\biggl( |\{ p \mid p>0 \text{ prime: }p \, |\,|W_G(K)|\}|-1\biggr),\]
where the sum runs over the conjugacy classes of proper subgroups.
\end{Thm}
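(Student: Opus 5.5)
We argue by descending induction along a filtration $\cat F=\cat F_0\subseteq \cat F_1\subseteq\dots\subseteq\cat F_n=\mathrm{All}$ in which each step adds a single conjugacy class $(K_i)$, exactly as in the proof of the algebraicity theorem. The inductive claim, strengthened for solvable $G$, is: for every $i<n$, $\Spec(A(G)/\cat F_i)$ is connected, and $\pi^{\et}_{\leq 1}(A(G)/\cat F_i)\simeq\mathfrak F(N_i)$ with
\[
N_i=\sum_{j> i,\ (K_j)\neq (G)}\bigl(\omega(|W_G(K_j)|)-1\bigr),
\]
where $\omega(m)$ denotes the number of primes dividing $m$. For $i=0$, $\cat F=\emptyset$, this is the statement. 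The base case is $i=n-1$, where $A(G)/\cat F_{n-1}=A(G)/(\text{proper subgroups})\cong\Z$ via $|(-)^G|$, and $\pi^{\et}_{\leq1}(\Z)=*$ by Minkowski.

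For the inductive step we take $K=K_{i+1}$ and use \cref{prop:explicit_po}. The outer square becomes a pushout
\[
\pi^{\et}_{\leq1}(A(G)/\cat F_i)\simeq \pi^{\et}_{\leq1}(A(G)/\cat F_{i+1})\sqcup_{\pi^{\et}_{\leq1}(\Z/|W_G(K)|)} *,
\]
with the left leg factoring through $\pi_0$ by the same proposition. We have $\pi^{\et}_{\leq1}(\Z/|W_G(K)|)=\bigsqcup_{p\mid |W_G(K)|}\pi^{\et}_{\leq1}(\Fp)$, which has $\omega(|W_G(K)|)=:m$ components. Hence, up to equivalence, the pushout is obtained from the connected groupoid $\mathfrak F(N_{i+1})$ by gluing in a single point along $m$ points. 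Since both maps out of the discrete set of $m$ points land in connected groupoids, this is a profinite "graph of groups" pushout: homotopically it is attaching a contractible vertex along $m$ points, which adds $m-1$ free loops. The result is therefore connected with fundamental group $\widehat{F(N_{i+1})}*\widehat{F(m-1)}=\widehat{F(N_{i+1}+m-1)}$, using that profinite completion commutes with free products in this pushout of profinite groupoids. Here $m\ge1$ because $K\ne G$ forces $|W_G(K)|$ to be… wait: $W_G(K)$ can be trivial for a proper subgroup $K$ (for example a self-normalizing one), in which case $m=0$.

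This is exactly where solvability and Dress's theorem enter, and it is the main obstacle. If $m=0$, the gluing adds a new connected component, and the formula would give a contribution of $-1$. So we must show that for solvable $G$ and every proper $K$, $|W_G(K)|>1$, i.e. $N_G(K)\neq K$. This follows from a classical fact: in a solvable group, a proper subgroup is never self-normalizing? That is false in general (a Sylow $2$ of $S_3$ is self-normalizing), so we instead argue more carefully. The contribution should also remain correct when $m=0$ if the component is absorbed later. Concretely, we do not claim connectivity at every stage; instead we count globally. The final groupoid arises from $n$ contractible pieces (one $\Z$ per conjugacy class) glued along $\sum_K m_K$ points, where $m_K=\omega(|W_G(K)|)$. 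By the profinite van Kampen principle, it is the completion of the fundamental groupoid of a graph with $n$ vertices and $\sum m_K$ edges. By Dress, $\Spec A(G)$ is connected, so this graph is connected, and its rank is $\sum m_K-n+1=\sum_{K\neq G}(m_K-1)$, since $m_G=0$.

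To justify the graph model, we interpret each inductive pushout as attaching a vertex along $m$ edges. Iterated homotopy pushouts of profinite groupoids along discrete sets then compute the profinite completion of the discrete fundamental groupoid of the graph. The fact that every leg factors through $\pi_0$, established in \cref{prop:explicit_po}, is what makes all gluing maps discrete.
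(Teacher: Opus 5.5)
Once you drop the first strategy, your argument is correct and is essentially the paper's proof. You model the iterated pushouts of \cref{prop:explicit_po} as the profinite completion of a finite graph with one vertex per conjugacy class and $\omega(|W_G(K)|)$ edges attached at the $K$-th step, use Dress only for connectedness of the final stage, and read off the rank as $E-V+1$. This is the same Euler-characteristic count that the paper performs step by step via $\chi(\cat F)$ and \cref{prop:euler_add}(b), together with $n(\emptyset)=1$.

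Two small points to clean up. First, delete the abandoned strengthened induction: it really is false, since for $G=S_3$ the self-normalizing $C_2$ makes $\Spec(A(S_3)/\{1\})$ disconnected. Second, replacing $\pi^{\et}_{\leq 1}(\Z/|W_G(K)|)$ by its set of components needs more than the factorization through $\pi_0$. It also needs the left-hand square of \eqref{megasquare} to be a pushout, and that holds only in $1$-truncated profinite spaces.
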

\begin{Exa}\label{ex-p-group}
    For every $p$-group $G$, the \'etale fundamental groupoid $\pi_{\leq 1}^{\et}(A(G))$ is trivial. This recovers \cite[Corollary 9.10]{Separablepaper}. Conversely, if $\pi_{\leq 1}^{\et}(A(G))$ is trivial, then $G$ must be solvable but we have not been able to find a group theoretic chracterization for the triviality of $\pi_{\leq 1}^{\et}(A(G))$.\footnote{A computation reveals that of the $292$ isomorphism classes of (solvable) groups of order less than $60$ and not a prime power, 26 have trivial $\pi_{\leq 1}^{\et}(A(G))$.} 
\end{Exa}
\begin{Exa}\label{ex:divisors}
    Let $G=C_N$ be the cyclic group with $N$ elements. Decompose $N$ into its prime factorization 
    \[
    N=\prod_{i=1}^n p_i^{\alpha_i}
    \]
    for $n\ge 0$ and $\alpha_i\ge 1$. Then $\pi_{\leq 1}^{\et}(A(C_N))$ is the free profinite group on
\[ \sum\limits_{\underline\beta}\bigl( |\{ i\, |\, \alpha_i>\beta_i\}|-1\bigr)\]
generators, where the sum extends over all sequences of integers $\underline{\beta}=(\beta_1,\ldots,\beta_n)$ with $0\leq\beta_i\leq\alpha_i$ and $\underline{\beta}\neq\underline{\alpha}$.
\end{Exa}
The following example shows that any finitely generated free profinite group can appear as a Galois groupoid.

\begin{Exa}\label{ex-cyclic-cpq}
    For all $n\ge 0$ and distinct primes $p\neq q$ the \'etale fundamental groupoid of $A(C_{p^{n+1}q})$ is the free profinite group on $n$ generators. This is immediate from \Cref{ex:divisors}. 
\end{Exa}
The rest of this section is devoted to proving the above results. We start by collecting a few preliminaries on profinite groupoids. 
\begin{Rec}
    Let $\mathcal{S}_\pi^\wedge$ denote the $\infty$-category of profinite spaces which is defined as the $\infty$-category of pro-objects in $\pi$-finite spaces, see \cite[Definition E.0.7.11]{SAG}. We identify the $\infty$-category of profinite groupoids $\mathrm{Pro}(\mathsf{Grp}_{\fin})$ with the subcategory of $1$-truncated objects in $\mathcal{S}_\pi^\wedge$. Then there are colimit preserving functors 
    \[
    (-)^\wedge_{\pi} \colon \mathcal{S} \to \mathcal{S}^\wedge_\pi \quad \mathrm{and} \quad \tau_{\leq 1} \colon \mathcal{S}^\wedge_\pi \to \mathrm{Pro}(\mathsf{Grp}_{\fin})
    \]
    respectively given by the profinite completion functor \cite[Proposition E.4.4.1]{SAG} and the $1$-truncation functor \cite[Example E.4.1.3]{SAG}. All pushouts of profinite groupoids we will wish to consider turn out to lie in the essential image of $\tau_{\leq 1}\circ (-)_\pi^\wedge$, and the special values we need to know are 
    \[
    (*)_\pi^\wedge=* \quad \mathrm{and} \quad  \tau_{\leq 1}((BG)_\pi^\wedge)=B\widehat{G},
    \]
    where $G$ is an arbitrary group with pro-finite completion $\widehat{G}$. 
\end{Rec}
\begin{proof}[Proof of \cref{thm:structure}]
    We show by descending induction on $\cat F$ that $\pi^{\et}_{\leq 1}(A(G)/\cat F)$ is the profinite completion of a finite disjoint union of bouquets of circles. The base case of $\cat F=\mathcal P$ being the family of proper subgroups follows from $A(G)/\mathcal P\simeq\mathbb Z$ and $\pi^{\et}_{\leq 1}(\mathbb Z)=*$ by Minkowski's theorem.
For the induction step we consider the following diagram in which the outer square is the pushout provided by \cref{prop:explicit_po} as is the indicated factorization through $\pi_0$:
\begin{equation}\label{megasquare}
    \begin{tikzcd}
       \pi_{\leq 1}^{\et}(\mathbb Z/|W_G(K)|) \arrow[r]\arrow[d] & \pi_0(\pi_{\leq 1}^{\et}(\mathbb Z/|W_G(K)|))\arrow[r]\arrow[d] & \pi_{\leq 1}^{\et}(A(G)/\cat F\cup K)\arrow[d] \\
        *=\pi_{\leq 1}^{\et}(\mathbb Z) \arrow[r] & * \arrow[r] & \pi_{\leq 1}^{\et}(A(G)/\cat F).
    \end{tikzcd}
    \end{equation}
    We observe that 
    \begin{equation}\label{circle}
    \pi_{\leq 1}^{\et}(\mathbb Z/|W_G(K)|)= \bigsqcup_{ 0<p \,| \,|W_G(K)|} \mathfrak F(1)
    \end{equation}
    is the disjoint union of as many copies of the pro-finitely completed circle as there are prime divisors of $|W_G(K)|$ (possibly none!). The claim \eqref{circle} follows from the Chinese remainder theorem, nil-invariance of the \'etale fundamental groupoid and the computation of the absolute Galois group of a finite field. This implies that the left square is a pushout, remembering that we work in $1$-truncated profinite spaces, hence the right hand square is a pushout as well. Since the claimed form of $\pi_{\leq 1}^{\et}(A(G)/\cat F)$ is clearly stable under attaching finitely many $1$-cells, this concludes the proof.
\end{proof}
In order to prove \cref{thm:solvable} it will be useful to set some notation. 
\begin{Def}
    We organize the invariants in \cref{thm:structure} into an unordered list 
    \[
    L(\cat F)=[\alpha(\cat F,1), \ldots, \alpha(\cat F, n(\cat F))]
    \]
    and define  
    \[ 
    \chi(\cat F):=n(\cat F) - \sum_{i=1}^{n(\cat F)}\alpha(\cat F,i).
    \]
\end{Def}

\begin{Exa}\label{ex-chiP}
    For $\cat P$ the family of proper subgroups of $G$, we have already noted that $\pi_{\leq 1}^{\et}(A(G)/\cat P)=\pi_{\leq 1}^{\et}(\Z)=\ast$ from which we deduce that 
    \[
    n(\cat P)=1 \quad \alpha(\cat P, 1)=0 \quad \chi(\cat P)=1 \quad \mathrm{and} \quad L(\cat P)=[0].
    \]
\end{Exa}
\begin{Rem}\label{rem-dress}
    Note that $\chi(\cat F)$ determines $L(\cat F)$ if and only if $n(\cat F)=1$. In general, we do not know a simple characterization for when this happens, but $n(\emptyset)=1$ is equivalent to $\Spec(A(G))$ being connected, i.e. $G$ being solvable.
\end{Rem}
\begin{Not}\label{notation}
    We set 
    \[
        \mathsf P(W_G(K))\coloneqq \pi_0(\pi_{\leq 1}^{\et}(\mathbb Z/|W_G(K)|))=\{p \text{ prime}\mid\, 0 < p \, | \, |W_G(K)|\}\\
    \]
    using \eqref{circle}, and write 
    \[
    f^G_{\cat F, K} \colon \mathsf P(W_G(K)) \to\pi_0(\pi_{\leq 1}^{\et}(A(G)/\cat F\cup K))
    \]
    for the effect on connected components of the map 
    \[ 
    \pi^{\et}_{\leq 1}(\mathbb Z/|W_G(K)|)\longrightarrow\pi_{\leq 1}^{\et}(A(G)/\cat F\cup K)
    \]
from \cref{prop-alg-galois-pullback}.
\end{Not}
\begin{Prop}\label{prop:euler_add}
    Let $\cat F$ be a family of subgroups of $G$ and $K\subseteq G$ a subgroup minimally not in $\cat F$. 
    \begin{enumerate}
    \item Then the list $L(\cat F)$ consists of $\alpha(\cat F \cup K, i)$ for all $ i \not \in \im(f_{\cat F, K}^G)$, and the number 
    \[ 
    |\mathsf P(W_G(K))|-|\im(f_{\cat F, K}^G)|+\sum_{i\in\im(f_{\cat F, K}^G)}\alpha (\cat F\cup K,i).
    \]
\item $\chi(\cat F) = \chi(\cat F\cup K)+1-|\mathsf{P}(W_G(K))|.$
\end{enumerate}
\end{Prop}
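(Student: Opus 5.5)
The plan is to read everything off the pushout \eqref{megasquare} from the proof of \cref{thm:structure}, in which the right-hand square was shown to be a pushout:
\[
\begin{tikzcd}
\mathsf P(W_G(K)) \arrow[r,"f^G_{\cat F,K}"]\arrow[d] & \pi_{\leq 1}^{\et}(A(G)/\cat F\cup K)\arrow[d]\\
* \arrow[r] & \pi_{\leq 1}^{\et}(A(G)/\cat F).
\end{tikzcd}
\]
Here the discrete set $\mathsf P(W_G(K))$ is viewed as a profinite groupoid. By the induction in \cref{thm:structure}, we may write
\[
\pi_{\leq 1}^{\et}(A(G)/\cat F\cup K)=\tau_{\leq 1}\bigl(\textstyle\bigsqcup_i \bigvee_{\alpha(\cat F\cup K,i)} S^1\bigr)^\wedge_\pi .
\]
Since $\tau_{\leq 1}\circ(-)^\wedge_\pi$ preserves colimits, the pushout is computed by first forming the pushout of spaces $X\cup_{\mathsf P} *$, where $X$ is the disjoint union of bouquets of circles, and then profinitely completing and truncating.

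The topological pushout is obtained by collapsing the finite set of points $f(\mathsf P)\subseteq X$ to a single point. Up to homotopy, one may place the image points at the wedge points of the components they hit; if several primes map to the same component, this is harmless. One then checks directly what the collapse does:
\begin{itemize}
\item Components $i\notin \im(f^G_{\cat F,K})$ are untouched, which gives the summands $\alpha(\cat F\cup K,i)$ in $L(\cat F)$.
\item The components $i\in\im(f)$ are glued to one another at a single point. Identifying a set of $m=|\mathsf P|$ points lying in $c=|\im f|$ components is homotopy equivalent to wedging those components together and adding $m-c$ extra circles. The extra circles come from identifying points that already lie in the same component, or equivalently from the cycles in the bipartite graph between $\mathsf P$ and $\im f$ together with the cone point.
\item The cone point $*$ itself is added as an extra component if $\mathsf P=\emptyset$. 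In that case it contributes a component with $\alpha=0$. This is consistent with the count $|\mathsf P|-|\im f|+\sum(\dots)$ when $\mathsf P$ is empty, and is an entry that must be included in the list.
\end{itemize}
This gives the single new entry
\[
|\mathsf P(W_G(K))|-|\im f|+\sum_{i\in\im f}\alpha(\cat F\cup K,i).
\]
Making this precise is an Euler characteristic or van Kampen computation for graphs. Completion and $1$-truncation then turn each bouquet of $n$ circles into $\mathfrak F(n)$, which proves (1).

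Part (2) is then arithmetic. The number of components changes as $n(\cat F)=n(\cat F\cup K)-|\im f|+1$, and the sum of the $\alpha$'s increases by exactly $|\mathsf P|-|\im f|$. Hence
\[
\chi(\cat F)=\chi(\cat F\cup K)-|\im f|+1-(|\mathsf P|-|\im f|)=\chi(\cat F\cup K)+1-|\mathsf P(W_G(K))|.
\]
Alternatively, this is additivity of the Euler characteristic for the pushout: $\chi(X)+\chi(*)-\chi(\mathsf P)$.

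The main obstacle is justifying that the pushout of profinite groupoids agrees with $\tau_{\leq 1}(-)^\wedge_\pi$ of the topological pushout, and that profinite completion of a free group of rank $n$ produces exactly $\mathfrak F(n)$ with these ranks well defined. The uniqueness of the ranks should follow because $\widehat{F(n)}$ has abelianization $\widehat{\Z}^n$. Both inputs are the facts recalled in the paper (colimit preservation and the special values), so the remaining work is the elementary homotopy type of collapsing finitely many points in a graph.
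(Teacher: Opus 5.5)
Your proposal is correct and follows essentially the paper's route: the paper computes the same pushout by factoring $f^G_{\cat F,K}$ as $\mathsf P(W_G(K))\twoheadrightarrow \im(f^G_{\cat F,K})\hookrightarrow \pi_0(\pi^{\et}_{\leq 1}(A(G)/\cat F\cup K))\hookrightarrow \pi^{\et}_{\leq 1}(A(G)/\cat F\cup K)$ and pasting pushouts. The first of these pushouts produces $\mathfrak F(|\mathsf P(W_G(K))|-|\im(f^G_{\cat F,K})|)$, which is your bipartite-graph count; the components in the image are then wedged on, and (b) is the same arithmetic you give. One point needs tidying: if several primes are sent to the same point of the space, you must either take the homotopy pushout (double mapping cylinder) or choose distinct points in each component, because literally collapsing the set $f(\mathsf P)$ would lose the $|\mathsf P|-|\im f|$ extra circles that your count correctly includes.
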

\begin{proof}
    Part (b) is a direct computation using (a) which we leave to the reader.
    
    We focus on part (a) which is a fine print of the above proof of \cref{thm:structure}. The right hand pushout square of \eqref{megasquare} using the notation introduced in \cref{notation} can be written as 
    \begin{equation}\label{square2}
    \begin{tikzcd}
          \mathsf P(W_G(K)) \arrow[r, "f_{\cat F,K}^G"] \arrow[d] & \pi_{\leq 1}^{\et}(A(G)/\cat F\cup K)\arrow[d]\\
          \ast \arrow[r] & \pi_{\leq 1}^{\et}(A(G)/\cat F).
    \end{tikzcd}
    \end{equation}
    
    We can factors the top horizontal map of the previous square as the composite 
    \[
    \mathsf P(W_G(K)) \xrightarrowdbl{f_{\cat{F}, K}^G}  \im(f_{\cat F, K}^G) \hookrightarrow \pi_0(\pi_{\leq 1}^{\et}(A(G)/\cat F\cup K)) \xhookrightarrow{\iota}\pi_{\leq 1}^{\et}(A(G)/\cat F\cup K)
    \]
    where the map $\iota$ is the inclusion of some chosen based points. For notational convenience we set 
    \[
    A= \mathsf P(W_G(K))\quad X=\im(f_{\mathcal{F}, K}^G) \quad B= \pi_0(\pi_{\leq 1}^{\et}(A(G)/\cat F\cup K))
    \]
    so that the above composite reads
    \[
    A \xrightarrow{f_{\cat F, K}^G} X \hookrightarrow B \xhookrightarrow{\iota} \pi_{\leq 1}^{\et}(A(G)/\cat F\cup K).
    \]
    The pushout square \eqref{square2} can be obtain by pasting the following diagrams
    \[
    \begin{tikzcd}
       \bigsqcup_{a\in A}\ast  \arrow[r,two heads, "f_{\mathcal{F}, K}^G"]\arrow[d] & X\arrow[d] \arrow[r, hook]&   \bigsqcup_{b \in B} \ast \arrow[d]\\
        *\arrow[r]& \mathfrak F(|A|-|X|) \arrow[r] & \mathfrak F(|A|-|X|)  \bigsqcup ( \bigsqcup_{b \in B\backslash X} \ast )
    \end{tikzcd}
    \]
    and 
    \[
    \begin{tikzcd}
        \bigsqcup_{b \in B}\ast \arrow[r, hook, "\iota"]\arrow[d] & \pi_{\leq 1}^{\et}(A(G)/\cat F\cup K)\arrow[d] \\
        \mathfrak F(|A|-|X|) \sqcup  ( \bigsqcup_{b \in B \backslash X} \ast)   \arrow[r] & \pi_{\leq 1}^{\et}(A(G)/\cat F).
    \end{tikzcd}
    \]
 We now observe that the first two squares are pushouts and conclude by the pasting lemma that the third is too. Taking into account the induction hypothesis we can rewrite this third square as
\[
    \begin{tikzcd}
         \bigsqcup_{b \in B}\ast \arrow[r, hook, "\iota"]\arrow[d] &   \bigsqcup_{b\in  B} \mathfrak F(\alpha(\cat F\cup K, b))\arrow[d]\\
           \mathfrak F(|A|-|X|) \sqcup  ( \bigsqcup_{b \in B \backslash X} \ast)    \arrow[r] & \pi_{\leq 1}^{\et}(A(G)/\cat F).
    \end{tikzcd}
    \]
The cospan of which decomposes into a disjoint union indexed by $X$ and $B\setminus X$, and the entire pushout decomposes accordingly as 
    \[
    \begin{tikzcd}
       ( \bigsqcup_{b\in X}*)  \sqcup ( \bigsqcup_{b\in B\setminus X}*)\arrow[r, hook]\arrow[d] &  ( \bigsqcup_{b\in X} \mathfrak F(\alpha(\cat F\cup K, b))) \sqcup ( \bigsqcup_{b\in B\setminus X} \mathfrak F(\alpha(\cat F\cup K, b)))\arrow[d]\\
        \mathfrak F (|A|-|X|) \sqcup ( \bigsqcup_{b\in B\setminus X} *)\arrow[r] & \mathfrak F(|A|-|X|+\sum_{b\in X}\alpha(\cat F\cup K, b)) \sqcup ( \bigsqcup_{b\in B\setminus X} \mathfrak F(\alpha(\cat F \cup K, b))).
    \end{tikzcd}
    \]
    The numbers appearing in $L(\cat F)$ are then given by the $\alpha(\cat F\cup K, b)$ for $b \in B \backslash X$, and
    \[
    |A| -|X| +\sum_{b \in X} \alpha(\cat F \cup K, b)
    \]
    from which the claim follows.
\end{proof}
\begin{Rem}
    If one is only interested in point (b) of the previous Proposition (e.g. in the case of solvable groups where $\chi(\cat F)$ determines the whole Galois groupoid), there is another slightly more conceptual way to obtain it: $\chi(\cat F)$ can be viewed as profinite Euler characteristic of $\pi_{\leq 1}^{\et}(A(G)/\cat F)$, defined e.g. by taking continuous cohomology with coefficients in any finite ring. This profinite Euler characteristic is then additive along pushouts of profinite spaces. The key point is then that while the square \eqref{megasquare} is only a pushout square of profinite $1$-groupoids, square \eqref{square2} is automatically a pushout square of profinite groupoids, with no $1$-truncation needed.\\ 
Since it would take more time to provide details on the relevant profinite Euler characteristic, and (b) follows from (a) in which we are also interested, we have opted to stick to the current proof. 
\end{Rem}

\begin{proof}[Proof of \cref{thm:solvable}]
Since $G$ is solvable we know that $n(\emptyset)=1$, and so by \cref{thm:structure} the \'etale fundamental groupoid is given by $\mathfrak F(\alpha(\emptyset, 1))$. Therefore we need to verify that the integer $\alpha(\emptyset,1)$ is given as in the theorem. On one side we know that 
\begin{equation}\label{formula1}
\chi(\emptyset)\coloneqq n(\emptyset)-\alpha(\emptyset,1)=1 - \alpha(\emptyset,1).
\end{equation}
On the other hand, we can choose a filtration of families 
\[
\emptyset \subseteq \cat F_1 \subseteq \cat F_2 \subseteq \ldots \subseteq \cat F_s= \cat P
\]
where each $\cat F_{i+1} \backslash \cat F_i$ consists only of a single $G$-conjugacy class $(K_{i+1})$ and $\cat P$ is the family of proper subgroups. Using \cref{prop:euler_add}(b) repeatedly, we find:
\begin{align*}
    \chi(\emptyset)& = \chi(\cat F_1) +1 - |\mathsf P(W_G(K_1))| \\
                   & =(\chi(\cat F_2) +1 - |\mathsf P(W_G(K_2))|) +1 -  |\mathsf P(W_G(K_1))|\\
                   & = \ldots \\
                   & =\chi(\cat F_s) +1 - |\mathsf P(W_G (K_s))| + \ldots + 1 - |\mathsf P(W_G(K_2))| +1 -  |\mathsf P(W_G(K_1))|\\
                   & =1 +1 - |\mathsf P(W_G (K_s))| + \ldots + 1 - |\mathsf P(W_G(K_2))| +1 -  |\mathsf P(W_G(K_1))|\\
\end{align*}
using that $\chi(\cat F_s)=\chi(\cat P)=1$ by \cref{ex-chiP}. Comparing this final equation with \eqref{formula1} we obtain the require formula for $\alpha(\emptyset,1)$.

\end{proof}
\section{The \'etale fundamental groupoid of the Burnside ring using GAP}\label{sec-diagram}
In this section, we discuss an algorithm for calculating the \'etale fundamental groupoid of the Burnside ring from the table of marks. We explain this algorithm with a few small examples and then implement it in GAP. For a sample of our calculations, we refer the reader to \cref{tab:group_L}.

\subsection{The algorithm}
The following documented meta code computes the \'etale fundamental groupoid of $A(G)$ from the table of marks of $G$ using GAP.
\begin{itemize}
\item[(0)] import a table of marks tom of a group $G$, extract its size $s$ (the number of subgroups of $G$ up to conjugacy) and the group order $g$ and for every prime $p>0$ dividing $g$, create $C(p):=\mathrm{CyclicExtensionsTom}(\mathrm{tom},p).$
\end{itemize}
The Zariski spectrum of $A(G)$ consists of $s$ many copies of $\Spec(\mathbb Z)$ glued along closed points. For every prime $p$, the equivalence relation among the components of being glued along a point of residue characteristic $p$ is recorded in $C(p)$.
\begin{Exa}
    Let $C_6$ be the cyclic group of order $6$. Using GAP we can extract its table of marks $\mathrm{tom}$:
\[
\begin{tabular}{|l|c|c|c|c|}
 \hline
  1   & 6 & 0 & 0 & 0  \\
   \hline
  $C_2$   & 3 & 3 & 0 & 0 \\
   \hline
 $C_3$   & 2 & 0 & 2 & 0 \\
   \hline
  $C_6$   & 1 & 1 & 1 & 1 \\
  \hline
\end{tabular}
\]
Notice that the diagonal entries of the table of marks are precisely the orders of the Weyl groups. Since the only prime numbers diving these diagonal entries are $2$ and $3$, we just need to calculate the gluing with respect to these primes using the command \text{CyclicExtensionsTom(tom, p)} in GAP:
\begin{align*}
    \text{CyclicExtensionsTom(tom, 2)} & = [[1,2], [3,4]]\\
    \text{CyclicExtensionsTom(tom, 3)} & = [[1,3], [2,4]]\\
\end{align*}
We can draw the above gluing in the following way:
\begin{equation}\label{gluingC6}
\begin{tikzpicture}
    \foreach \x in {1,2,...,4}{
        \draw[gray!50, thick] plot [smooth] coordinates {(8,-\x/2) (0,-\x/2)} node [black, left] {\x};
        }
    \filldraw[blue] (1,-1/2) circle (2pt);
    \filldraw[blue] (1,-2/2) circle (2pt);
    \filldraw[blue] (1,-3/2) circle (2pt);
    \filldraw[blue] (1,-4/2) circle (2pt);
    \draw [blue] (1,-1/2) to [bend left=45] (1,-2/2);
    \draw [blue] (1,-3/2) to [bend left=45] (1,-4/2);
    \draw (1,0) circle [radius=0.2] node {$2$};
    \filldraw[red] (2,-1/2) circle (2pt);
    \filldraw[red] (2,-2/2) circle (2pt);
    \filldraw[red] (2,-3/2) circle (2pt);
    \filldraw[red] (2,-4/2) circle (2pt);
    \draw [red] (2,-1/2) to [bend left=45] (2,-3/2);
     \draw [red] (2,-2/2) to [bend left=45] (2,-4/2);
    \draw (2,0) circle [radius=0.2] node {$3$};
\end{tikzpicture}   
\end{equation}
We then consider a filtration of families of $C_6$:
\[
\emptyset \subseteq \cat F_1 \subseteq \cat F_2 \subseteq \cat F_3 \subseteq \cat F_4 \subseteq \mathrm{All}
\]
where $\cat F_4$ is the family of proper subgroups, and we move up by one family by adding a single group at the time. We can describe the \'etale fundamental groupoid of $A(C_6)$ iteratively using the pushout diagram \eqref{square2}. At every step we are gluing in a copy of $\pi_{\leq 1}^{\et}(\Z)=\ast$ and the gluing witness by the map $f_{\cat F,K}^G$ in \eqref{square2} is precisely recorded in \eqref{gluingC6}.
\end{Exa}
We carry on with our algorithm:
\begin{itemize}
\item[(1)] create an integer $c$, an ordered list of integers $L$ and an ordered list of ordered lists of integers $C$, and set $c:=s$, $L:=[0]$, $C:= [ [c] ]$.
\end{itemize}
The number $c$ indicates the number of the last copy of $\Spec(\mathbb Z)$ that has been glued in to build $\Spec(A(G))$. The list $L$ is the list from last section, which we want to compute iteratively using the formula in \cref{prop:euler_add}, (a). The list $C$ is the equivalence relation among the copies of $\Spec(\Z)$ glued in of belonging to the same component of the current $\Spec(A(G)/\cat F_c)$.
\begin{itemize}
\item[(2)] if $c=1$ then STOP ($L$ is the result) else
\item[(3)] create the set $P$ of positive prime divisors of the $c-1$ diagonal entry of tom.
\end{itemize}
These are the prime divisors of the Weyl group $W_G(K)$.
\begin{itemize}
\item[(4)] if $|P|=0$ then make 
\[
L := \text{Concatenation}( [ 0 ], L ) \quad \mathrm{and}\quad C = \text{Concatenation}([c-1], C)
\]
and go to (13), else
\item[(5)] For $p \in P$, let $Ep$ be the entry of $C(p)$ containing $c-1$.
\item[(6)]Create the subset
\[ 
I:=\bigcup_{p\in P} \left\{ x\in C\mid x\cap Ep\neq\emptyset \right\} 
\]
\end{itemize}
This $I$ counts the gluing occurring at this step, i.e., the components of $A(G)/\cat F_c$ in the image of $f^K_{\cat F_c}$.
\begin{itemize}
\item[(7)] Let $A$ be the list of indexes $j$ such that the $j$-entry of $C$ is in $I$. Let $B$ be the list of indexes $j$ such that the $j$-entry of $C$ is not in $I$.
\item[(8)] Let $L^A=\mathrm{Sublist}( L, A )$ be the sublist of $L$ described by $A$. Similarly,
let $L^B=\mathrm{Sublist}( L, B )$ be the sublist of $L$ described by $B$.
\item[(9)] Let $\mathrm{sum}(L^A)$ be the sum of all entries in $L^A$. Set $N=|P|-|I|+\mathrm{sum}(L^A)$.
\item[(10)] Redefine 
\[
L\coloneqq\mathrm{Concatenation}([N], L^B).
\]
\end{itemize}
This is the formula for $L$ given in \cref{prop:euler_add}(a).
\begin{itemize}
\item[(11)] Create a list $C(I)$ containing $c-1$ and all the elements of $I$. Create a sublist $NC(I)$ of $C$ with all the elements of $C$ not in $I$.
\item[(12)] Redefine 
\[
C:=\mathrm{Concatenation}(C(I), NC(I)).
\]
\item[(13)] decrease $c$ by 1 and go to (2).
\end{itemize}

For concreteness, we run through this algorithm by hand in two examples. 

\subsection{Cyclic group of order $6$}
We record the table of marks
\[
\begin{tabular}{|l|c|c|c|c|}
 \hline
  1   & 6 & 0 & 0 & 0  \\
   \hline
  $C_2$   & 3 & 3 & 0 & 0 \\
   \hline
 $C_3$   & 2 & 0 & 2 & 0 \\
   \hline
  $C_6$   & 1 & 1 & 1 & 1 \\
  \hline
\end{tabular}
\]
and the gluing diagram 
\[
\begin{tikzpicture}
    \foreach \x in {1,2,...,4}{
        \draw[gray!50, thick] plot [smooth] coordinates {(8,-\x/2) (0,-\x/2)} node [black, left] {\x};
        }
    \filldraw[blue] (1,-1/2) circle (2pt);
    \filldraw[blue] (1,-2/2) circle (2pt);
    \filldraw[blue] (1,-3/2) circle (2pt);
    \filldraw[blue] (1,-4/2) circle (2pt);
    \draw [blue] (1,-1/2) to [bend left=45] (1,-2/2);
    \draw [blue] (1,-3/2) to [bend left=45] (1,-4/2);
    \draw (1,0) circle [radius=0.2] node {$2$};
    \filldraw[red] (2,-1/2) circle (2pt);
    \filldraw[red] (2,-2/2) circle (2pt);
    \filldraw[red] (2,-3/2) circle (2pt);
    \filldraw[red] (2,-4/2) circle (2pt);
    \draw [red] (2,-1/2) to [bend left=45] (2,-3/2);
     \draw [red] (2,-2/2) to [bend left=45] (2,-4/2);
    \draw (2,0) circle [radius=0.2] node {$3$};
\end{tikzpicture}
\]
for the cyclic group of order $6$.
Since the above diagram consists of four rows, the algorithm will consist of four iterations:
\begin{itemize}
    \item[(1)] We set $L=[0]$ and $C:=[[4]]$. This $L$ encodes that $\pi_{\leq 1}^{\et}(A(C_6)/\cat P)=
    \ast$ and $C$ is the list of connected components of $\pi_{\leq 1}^{\et}(A(C_6)/\cat P)$: one component corresponding to the fourth horizontal line in the above schematic. 
    \item[(2)] Look at the third row of the table of marks. The diagonal term in this row is $2$, so the set $P$ is prime divisors of this entry is a singleton. The gluing diagram shows that the third line is glued along $2$ with the fourth line. We record this as follows:
    \[
    I=\{a \in C \mid a \cap [3,4] \not= \emptyset \}=\{[4]\}.
    \]
    We note that $C -I $ is empty. It follows that $L^B$ is empty and $L^A=[0]$. We then update the $C$ to $C=[[3,4]]$. And set $L=[|P|-|I|+ 0]=[0]$.
    \item[(3)] Look at the second row of the table of marks. The diagonal term in this row is $3$, so the set $P$ is prime divisors of this entry is a singleton. The gluing diagram shows that the second line is glued along $3$ with the fourth line. Again we record 
     \[
    I=\{a \in C \mid a \cap [2,4]\not =\emptyset \}=\{[3,4]\}.
    \]
    We calculate $I - C$. This is empty so we update the $C$ to $C=[[2,3,4]]$. And set $L=[|P|-|I|+0]=[0]$. 
    \item[(4)] Look at the first row of the table of marks. The diagonal term in this row is $6$, and so now the set $P$ of prime divisors has two elements. There is some glueing at the prime $2$ and $3$, so we record it 
    \[
    I = \{\{a \in C \mid a \cap [1,2]\not =\emptyset \}\} \cup \{\{a \in C \mid a \cap [1,3]\not =\emptyset \}\} =\{[2,3,4]\}
    \]
    We calculate $I -C$ which is empty, so we update $C=[[1,2,3,4]]$ and $L=[|P|-|I|+0]=[1]$. 
\end{itemize}
Thus the \'etale fundamental groupoid is the profinite completion of a circle. 
\subsection{Symmetric group $S_3$}
Using GAP we can extract its table of marks $\mathrm{tom}$:
\[
\begin{tabular}{|l|c|c|c|c|c|c|c|c|c|}
 \hline
  $1$   & 6 & 0 & 0 & 0   \\
   \hline
  $C_2$  & 3 & 1 & 0 & 0  \\
   \hline
 $C_3$  & 2 & 0 & 2 & 0  \\
   \hline
  $S_3$:  & 1 & 1 & 1 & 1  \\
  \hline
\end{tabular}
\]
Since the only prime numbers diving these diagonal entries are $2$ and $3$, we just need to calculate the gluing with respect to these primes using GAP:
\begin{align*}
    \text{CyclicExtensionsTom(tom, 2)} & = [[1,2], [3,4]]\\
    \text{CyclicExtensionsTom(tom, 3)} & = [[1,3], [2], [4]]\\
\end{align*}
We can draw the above gluing in the following way:
\[
\begin{tikzpicture}
    \foreach \x in {1,2,...,4}{
        \draw[gray!50, thick] plot [smooth] coordinates {(8,-\x/2) (0,-\x/2)} node [black, left] {\x};
        }
    \filldraw[blue] (1,-1/2) circle (2pt);
    \filldraw[blue] (1,-2/2) circle (2pt);
    \filldraw[blue] (1,-3/2) circle (2pt);
    \filldraw[blue] (1,-4/2) circle (2pt);
    \draw [blue] (1,-1/2) to [bend left=45] (1,-2/2);
    \draw [blue] (1,-3/2) to [bend left=45] (1,-4/2);
    \draw (1,0) circle [radius=0.2] node {$2$};
    \filldraw[red] (2,-1/2) circle (2pt);
   
    \filldraw[red] (2,-3/2) circle (2pt);
    
    \draw [red] (2,-1/2) to [bend left=45] (2,-3/2);

    \draw (2,0) circle [radius=0.2] node {$3$};
\end{tikzpicture}
\]
The algorithm will consist of four iterations:
\begin{itemize}
   \item[(1)] We set $L=[0]$ and $C:=[[4]]$. 
    \item[(2)] The diagonal term in the third row is $2$, so the set $P$ of prime divisors is a singleton. Now we have one glueing between the row $3$ and $4$ so we get $C=[[3,4]]$ and $L=[0]$.
    \item[(3)] The diagonal term in second row is $1$, so the set $P$ is prime divisors of this entry is empty. Therefore we set $L=[0, 0]$ and $C=[[2], [3,4]]$.
    \item[(4)] The diagonal term in the first row is $6$, and so now the set $P$ of prime divisors has two elements. There are exactly two gluing so we update $L=[0]$ and $C=[[1,2,3,4]]$. 
\end{itemize}
The \'etale fundamental groupoid is therefore trivial. 
\subsection{Alternating group $A_5$}
Using GAP we can extract its table of marks $\mathrm{tom}$:
\[
\begin{tabular}{|l|c|c|c|c|c|c|c|c|c|}
 \hline
  1:   & 60 & 0 & 0 & 0 & 0 & 0 & 0 & 0 & 0  \\
   \hline
  2:   &30 & 2 & 0 & 0 & 0 & 0 & 0 & 0 & 0   \\
   \hline
 3:   & 20 & 0 & 2 & 0 & 0 & 0 & 0 & 0 & 0  \\
   \hline
  4:   & 15 & 3 & 0 & 3 & 0 & 0 & 0 & 0 & 0   \\
  \hline
  5: & 12 & 0 & 0 & 0 & 2 & 0 & 0 & 0 & 0  \\
  \hline
  6: & 10 & 2 & 1 & 0 & 0 & 1 & 0 & 0 & 0  \\
  \hline
  7: & 6 & 2 & 0 & 0 & 1 & 0 & 1 & 0 & 0  \\
  \hline
  8: & 5 & 1 & 2 & 1 & 0 & 0 & 0 & 1 & 0  \\
  \hline
  9: & 1 & 1 & 1 & 1 & 1 & 1 & 1 & 1 & 1  \\
  \hline
\end{tabular}
\]
where the first column denotes the conjugacy classes of subgroups, with the first row being the trivial group, and the last row being $A_5$. The diagonal entries of the table of marks are precisely the orders of the Weyl groups, which agrees with the prime divisors of $|A_5|$. We calculate the gluing for the primes numbers diving the diagonal entries of the table of marks:
\begin{align*}
    \text{CyclicExtensionsTom}(\mathrm{tom},2)& = [ [ 1, 2, 4 ], [ 3, 6 ], [ 5, 7 ], [ 8 ], [ 9 ] ]\\
    \text{CyclicExtensionsTom}(\mathrm{tom},3)& = [ [ 1, 3 ], [ 2 ], [ 4, 8 ], [ 5 ], [ 6 ], [ 7 ], [ 9 ] ]\\
    \text{CyclicExtensionsTom}(\mathrm{tom},5)& = [ [ 1, 5 ], [ 2 ], [ 3 ], [ 4 ], [ 6 ], [ 7 ], [ 8 ], [ 9 ] ]\\
\end{align*}
and we depicted the gluing as before via the diagram:
\[
\begin{tikzpicture}
    \foreach \x in {1,2,...,9}{
        \draw[gray!50, thick] plot [smooth] coordinates {(8,-\x/2) (0,-\x/2)} node [black, left] {\x};
        }
    \filldraw[blue] (1,-1/2) circle (2pt);
    \filldraw[blue] (1,-2/2) circle (2pt);
    \filldraw[blue] (1,-3/2) circle (2pt);
    \filldraw[blue] (1,-4/2) circle (2pt);
    \filldraw[blue] (1,-5/2) circle (2pt);
    \filldraw[blue] (1,-6/2) circle (2pt);
    \filldraw[blue] (1,-7/2) circle (2pt);
    \draw [blue] (1,-1/2) to [bend left=45] (1,-2/2);
    \draw [blue] (1,-2/2) to [bend left=45] (1,-4/2);
    \draw [blue] (1,-3/2) to [bend left=45] (1,-6/2);
    \draw [blue] (1,-5/2) to [bend left=45] (1,-7/2);
    \draw (1,0) circle [radius=0.2] node {$2$};
    \filldraw[red] (2,-1/2) circle (2pt);
    \filldraw[red] (2,-3/2) circle (2pt);
    \filldraw[red] (2,-4/2) circle (2pt);
    \filldraw[red] (2,-8/2) circle (2pt);
    \draw [red] (2,-1/2) to [bend left=45] (2,-3/2);
    \draw [red] (2,-4/2) to [bend left=25] (2,-8/2);
    \draw (2,0) circle [radius=0.2] node {$3$};
    \filldraw[green] (3,-1/2) circle (2pt);
    \filldraw[green] (3,-5/2) circle (2pt); 
    \draw [green] (3,-1/2) to [bend left=45] (3,-5/2);
   
    \draw (3,0) circle [radius=0.2] node {$5$};
\end{tikzpicture}
\]
The rank of the table of marks is nine, so the algorith will have nine steps:
\begin{enumerate}
    \item[(1)] As usual we start with $L=[0]$ and $C=[[9]]$. 
    \item[(2)] Look at the eighth row which has diagonal term given by $1$. In this case the set $P$ of prime divisor of this entry is empty, so according to the algorithm we set $L=[0,0]$ and $C=[[8], [9]]$.
    \item[(3)] Look at the seventh row and here again we have a $1$ in the diagonal. Thus following the above we get $L=[0, 0, 0]$ and $C=[[7], [8], [9]]$.
    \item[(4)] Look at the sixth row where we have a $1$ in the diagonal. Thus we redefine $L=[0, 0, 0, 0]$ and $C=[[6], [7], [8], [9]]$.
    \item[(5)] Look at the fifth row, now we have a $2$ in the diagonal, so the set $P$ of prime divisors is a singleton. Now we have one glueing between the row $5$ and $7$ and in this case we get $C=[[5,7],[6],[8], [9]]$ and $L=[0,0,0,0]$.
    \item[(6)] Look at the fourth row, now we have a $3$ in the diagonal. We have only one gluing between the fourth and  eight row, so we get $C=[[4,8], [5,7], [6], [9]]$ and $L=[0,0,0,0]$.
    \item[(7)] Look at the third row, now we have a $2$ in the diagonal. We have only one glueing at the prime $2$ between the third and sixth row, so $C=[[3,6], [4,8], [5,7], [9]]$ and $L=[0,0,0,0]$.
    \item[(8)] Look at the second row, now we have a $2$. There is one gluing between the second and fourth row, so $C=[[2,4,8], [3,6], [5,7], [9]]$ and $L=[0,0,0,0]$.
    \item[(9)] Look at the first row, now we have $60$. Now $P$ has three elements but there are also three gluing involving the first row. In this case we get $C=([2,4,8,3,6,5,7],[9])$ and $L=[0,0]$.
\end{enumerate}
Thus the \'etale fundamental groupoid is just two points.

\subsection{GAP code}
We have implemented the above algorithm in GAP and the code is available at \url{https://sites.google.com/view/lucapol/}. We ran the code for several finite groups and the results are displayed in \cref{tab:group_L}.

\begin{landscape}
\begin{table}[htbp]
\centering
\caption{Values of $L$ for various finite groups}
\label{tab:group_L}

\renewcommand{\arraystretch}{1.2}

\begin{tabular}{|c|c|}
    \text{Group} & \text{L} \\
    \hline
     $A_5$ & [0,0] \\
     \hline
     $A_6$ & [0,0,0,0] \\
     \hline
     $A_7$ & [2,0,0,0,0,0,0]\\
     \hline
     $A_8$ & [16,0,0,0,0,0,0,0,0,0,0] \\
     \hline
     $A_9$ & [37,0,0,0,0,0,0,0,0,0,0,0,0] \\
     \hline 
     $A_{10}$ & [72,1,0,0,0,0,0,0,0,0,0,0,0,0,0,0,0,0]\\
     \hline 
     $A_{11}$ & [182,6,1,0,0,0,0,0,0,1,0,0,0,0,0,0,0,0,0,0,0,0,0,0,0,0]\\
     \hline 
     $A_{12}$ & [489,15,6,0,0,0,0,0,0,0,0,0,6,0,0,0,0,0,0,0,0,0,0,0,0,1,0,0,0,0,0,0,0,0,0,0,0,0,0,0,0,0,0,0,0,0]\\
     \hline
     $A_{13}$ & [1012,40,15,1,0,0,0,0,0,0,1,15,1,0,0,0,0,2,0,0,1,0,0,6,2,0,0,0,0,0,0,0,0,0,0,0,1,0,0,0,0,0,0,0,0,0,0,0,0,0,0,0,0]\\
     \hline 
     \hline
    $S_5$ & [1,0] \\
     \hline
     $S_6$ & [6,0,0,0]\\
     \hline 
     $S_7$ & [15,0,0,0,0,0]\\
     \hline 
     $S_8$ & [40,1,0,0,0,0,0,0,0]\\
     \hline
     $S_9$ & [105,2,1,0,0,1,0,0,0,0,0]\\
     \hline
     $S_{10}$ & [255,6,2,0,0,0,0,2,0,0,0,0,1,0,0,0,0]\\
     \hline
     $S_{11}$ & [617,24,6,0,0,0,0,1,6,0,1,0,0,0,2,0,0,0,1,0,0,0,0]\\
     \hline
     $S_{12}$ & [1633,58,24,0,0,0,0,0,1,2,24,0,0,0,2,0,0,0,0,0,6,0,0,0,0,0,0,0,2,0,0,0,0,0,1,0,0,0]\\
     \hline
     $S_{13}$ & [3748,151,58,4,1,0,0,0,6,6,58,4,0,0,8,0,0,4,1,0,24,7,0,0,0,0,0,0,0,0,0,6,0,0,0,0,0,0,0,0,2,0,0,1,0,0,0]\\
     \hline
     \hline
        
     SL$_2(\mathbb F_{3})$ & [1]\\
      \hline
     SL$_2(\mathbb F_{2^2})$& [0,0]\\
      \hline    
     SL$_2(\mathbb F_{5})$ & [2,0]\\
      \hline  
     SL$_2(\mathbb F_{7})$ & [3,0]\\
      \hline  
      \hline
       PSL$_2(\mathbb F_{2^4})$& [2,0,0]\\
       
      \hline  
      PSL$_2(\mathbb F_{5^3})$ & [4,0,0]\\
      \hline
      PSL$_3(\mathbb F_{11})$  & [198,0,0,0,0,1,0,0,1,1,0]\\
      \hline
      PSL$_4(\mathbb F_3)$  &[186,0,0,0,0,0,0,0,0,0,0,0,0,0]\\
      \hline\hline
 $M_{12}$ & [14,0,0,0,0,0,0,0,0,0,0,0] \\
      \hline
      $M_{23}$ & [25,0,0,0,0,0,0,0,0,0,0,0,0,0,0,0,0,0,0,0,0,0,0,0,0,0,0,0]\\
      \hline
      $M_{24}$ & [181,0,0,0,0,0,0,0,0,0,0,0,0,0,0,0,0,0,0,0,0,0,0,0,0,0,0,0,0,0,0,0,0,0,0,0,0,0,0,0 ]\\
      \hline

\end{tabular}
\end{table}
\end{landscape}

\bibliographystyle{alpha}
\bibliography{reference}

\end{document}